\definecolor{myblue}{rgb}{.25, .25, .9}
\definecolor{myred}{rgb}{.6, .4, .4}
\definecolor{myred2}{rgb}{.9, .1, .1}
\definecolor{mygreen}{rgb}{.25, .6, .5}
\numberwithin{equation}{section}
\newtheoremstyle{mystyle}
  {}
  {}
  {\itshape}
  {}
  {\bfseries}
  {.}
  { }
  {}
\theoremstyle{mystyle}
\newtheorem{theorem}{Theorem}[section]
\newtheorem{definition}[theorem]{Definition}
\newtheorem{lemma}[theorem]{Lemma}
\newtheorem{proposition}[theorem]{Proposition}
\newtheorem{corollary}[theorem]{Corollary}
\newtheorem{example}[theorem]{Example}
\newtheorem{remark}[theorem]{Remark} 
\newtheorem{assumption}[theorem]{Assumption}
\DeclareMathAlphabet{\mathbbold}{U}{bbold}{m}{n}
\titleformat{\subsection}[runin]
{\bfseries}{\llap{\thesubsection\hskip 9pt}}{0pt}{}
\titleformat{\subsubsection}[runin]
{\bfseries}{\llap{\thesubsubsection\hskip 9pt}}{0pt}{}
\titleformat{\paragraph}[runin]
{\bfseries}{\llap{\theparagraph\hskip 9pt}}{0pt}{}
\newcommand{\bmat}{\begin{pmatrix}}
\newcommand{\emat}{\end{pmatrix}}
\newcommand{\noopsort}[1]{}
\newcommand{\drop}[1]{}
\title{Efficient Learning of a Linear Dynamical System with Stability Guarantees}
\author{Wouter Jongeneel, Tobias Sutter and Daniel Kuhn
\thanks{Wouter Jongeneel and Daniel Kuhn are with the Risk Analytics and Optimization Chair, École Polytechnique Fédérale de Lausanne, \{wouter.jongeneel, daniel.kuhn\}@epfl.ch. Tobias Sutter is with the Department of Computer Science, University of Konstanz, tobias.sutter@uni-konstanz.de. This research was supported by the Swiss National Science Foundation under the NCCR Automation, grant agreement~51NF40\_180545.}}
\begin{document}
\maketitle
\thispagestyle{empty}

\begin{abstract}
We propose a principled method for projecting an arbitrary square matrix to the non-convex set of asymptotically stable matrices. Leveraging ideas from large deviations theory, we show that this projection is optimal in an information-theoretic sense and that it simply amounts to shifting the initial matrix by an optimal linear quadratic feedback gain, which can be computed exactly and highly efficiently by solving a standard linear quadratic regulator problem. The proposed approach allows us to learn the system matrix of a stable linear dynamical system from a single trajectory of correlated state observations. The resulting estimator is guaranteed to be stable and offers statistical bounds on the estimation error.
\end{abstract}

\section{Introduction}
\label{sec:introduction}
We study the problem of learning a stable linear dynamical system from a single trajectory of correlated state observations. 
This problem is of fundamental importance in various disciplines such as adaptive control \autocite{ref:WittenmarkAstrom_73}, system identification \autocite{ref:Kumar-86,Verhaegen}, reinforcement learning \autocite{ref:Sutton-1998,ref:BerDim-19,ref:SelfTuning, ref:recht2019tour} and approximate dynamic programming \autocite{ref:BerTsi-96, powell2007approximate}. Specifically, we consider a discrete-time linear time-invariant system of the form
\begin{equation} \label{eq:LTI:system}
x_{t+1} = \theta x_t + w_t, \quad x_0\sim \nu,
\end{equation}
where $x_t\in\mathbb{R}^n$ and $w_t\in\mathbb{R}^n$ denote the state and the exogenous noise at time $t\in\mathbb{N}$, respectively, while~$\theta$ represents a {fixed} system matrix, and $\nu$ stands for the marginal distribution of the initial state~$x_0$. We assume that $\theta$ is \textit{asymptotically stable}, that is, it belongs to $\Theta=\{{\theta}\in \mathbb{R}^{n\times n}:\rho({\theta})<1\}$, where $\rho({\theta})$ denotes the largest absolute eigenvalue of ${\theta}$. For ease of terminology, we will usually refer to $\Theta$ as the set of \textit{stable} matrices and to its complement in $\mathbb{R}^{n\times n}$ as the set of \textit{unstable} matrices. We assume that nothing is known about $\theta$ except for its membership in~$\Theta$, and we aim to learn $\theta$ from a single-trajectory of data $\{\widehat{x}_t\}_{t=0}^T$ generated by~\eqref{eq:LTI:system}. To this end, one can use the least squares estimator
\begin{equation} \label{eq:LS:MDP}
\widehat \theta_T = \left(\textstyle\sum_{t=1}^T \widehat x_t \widehat x_{t-1}^\mathsf{T}\right)\left(\textstyle\sum_{t=1}^T \widehat x_{t-1}\widehat x_{t-1}^\mathsf{T}\right)^{-1},
\end{equation}
which may take any value in $\Theta'=\mathbb{R}^{n\times n}$ under standard assumptions on the noise distribution. It is therefore possible that $\widehat{\theta}_T\notin \Theta$ even though $\theta\in \Theta$. 
This is troubling because stability is important in many applications, for example, when the estimated model is used for prediction, filtering or control, e.g., see the discussions in~\cite[pp.~53--60, 125--129]{ref:OverscheeDeMoor1996}. Estimating a stable model is also crucial for assessing the performance of a stable system or, in a simulation context, for generating useful exploration data.  


Given the prior structural information that~$\theta$ is stable, we thus seek an estimator that is guaranteed to preserve stability. A natural approach to achieve this goal would be to project the least squares estimator $\widehat{\theta}_T$ to the nearest stable matrix with respect to some discrepancy function on~$\mathbb{R}^{n\times n}$. This seems challenging, however, because $\Theta$ is open, unbounded and non-convex; see Figure~\ref{fig:2D:ncvx}($a$). To circumvent this difficulty, we introduce a new discrepancy function that adapts to the geometry of $\Theta$ and is thus ideally suited for projecting unstable matrices onto~$\Theta$. We will characterize the statistical properties of this projection when applied to the least squares estimator, and we will show that it can be computed efficiently even for systems with $ O(10^3)$ states.

The following example shows that na\"ive heuristics to project~$\theta'$ into the interior of~$\Theta$ could spectacularly fail.
\begin{example}[Projection by eigenvalue scaling]
\label{ex:proj}
\upshape{ A na\"ive method to stabilize a matrix~$\theta'\notin\Theta$ would be to scale its unstable eigenvalues into the complex unit circle. To see that the output of this transformation may not retain much similarity with the input~$\theta'$, consider the matrices
\begin{equation*}
    {\theta}'\! =\! \begin{bmatrix}
    1.01 & 10\\ .01 & 1
    \end{bmatrix},\; {\theta}'_a\!=\!\begin{bmatrix}
    .84 & 4.77 \\ .005 & .84
    \end{bmatrix},\; {\theta}'_b\! =\! \begin{bmatrix}
    .99 & 10 \\ 0 & .99
    \end{bmatrix}.
\end{equation*}
Clipping off the unstable eigenvalues of $\theta'$ at $|\lambda|=.99$ yields ${\theta}'_a$ with $\rho(\theta'_a)=.99$ and $\|{\theta}'-{\theta}'_a\|_2\gtrsim 5$. However, the matrix $\theta'_b$ also has spectral radius $\rho(\theta'_b)=.99$ but is much closer to $\theta'$. Indeed, we have $\|{\theta}'-{\theta}'_b\|_2\approx 0.02$. 
}
\end{example}

\begin{figure}[t!]
    \centering
    \includegraphics[angle=270,scale=0.6]{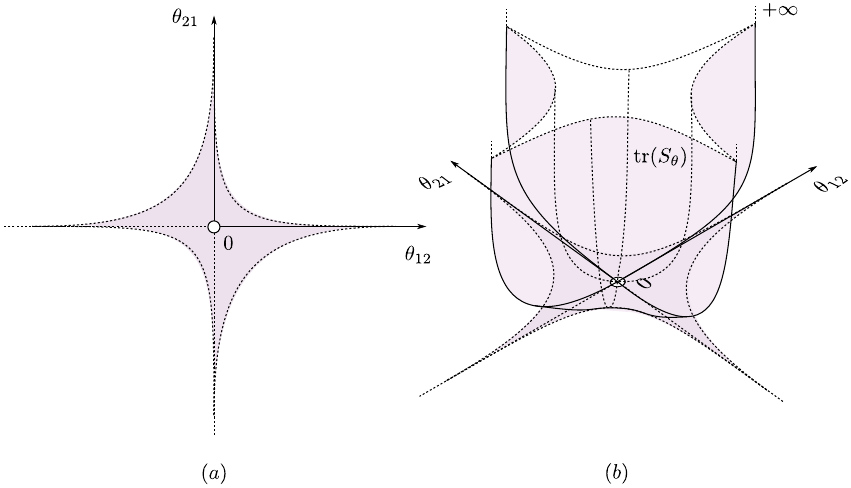}
    \caption{$(a)$ Visualization of a two-dimensional slice of the set~$\Theta$ of stable $2\times 2$-matrices with diagonal elements~$\theta_{11}=\theta_{22}=\frac{1}{2}$. The set of all corresponding off-diagonal elements is open, non-convex and unbounded along the coordinate axes. $(b)$ Some eigenvalues and the trace of the covariance matrix~$S_\theta$ of the invariant state distribution explode as~$\theta$ approaches the boundary of~$\Theta$.}
    \label{fig:2D:ncvx}
\end{figure}



\subsection{Related work}\label{ssec:related:work}
The problem of learning a stable dynamical system is widely studied in system identification, while the problem of projecting an unstable matrix onto~$\Theta$ with respect to some norm has attracted considerable interest in matrix analysis. 

In the context of identification, \textcite{ref:Maciejowski:stab} proposed one of the first methods to project a possibly unstable estimator onto $\Theta$ by using subspace methods. This pioneering approach has significant practical merits \autocite{ref:OverscheeDeMoor1996} but may also significantly distort the original estimator.
To overcome this deficiency, \textcite{ref:LacyBernstein2002} approximate $\Theta$ by the set of \textit{contractive} matrices whose operator norm is at most~$1$. While this set is convex, it offers but a conservative approximation of~$\Theta$. Several related methods have since been proposed to enforce stability~\autocite{ref:LacyBernstein2003,ref:Boots:Nips,ref:stab:Gersh}, which are all either conservative or computationally expensive. Moreover, these methods do not provide any statistical guarantees. \textcite{ref:vanGestel1,ref:vanGestel2} regularize the least squares objective and show that the spectral radius of the resulting estimator is bounded by a function of the regularization weights. As~$\Theta$ is an open set, however, the tuning of these weights remains a matter of taste. 
More recently, \textcite{ref:stableEM} propose a maximum likelihood approach that is attractive from a statistical point of view but can be computationally challenging in certain applications. 
On the other hand, several authors use Lyapunov theory to provide stability guarantees for deterministic vector fields; see, {\em e.g.}, \autocite{ref:Khansari-Zadeh,ref:Berkenkamp2017SafeRL,ref:KolterNips2019,ref:Umlauft}. A more recent approach by \textcite{ref:boffi2020learning} learns stability certificates from i.i.d.\ trajectories.
There is also a substantial body of literature on (sub-)optimal finite-sample concentration bounds for linear systems identified via least squares estimation \autocite{ref:Sim_18, ref:jedra-19,ref:sarkar19a, ref:Jedra-20, ref:sarkar-20nonparametric}. These approaches offer fast learning rates but cannot guarantee stability of the identified systems for finite sample sizes. 


Much like in dynamical systems theory, in matrix analysis one seeks algorithms for projecting an unstable deterministic matrix $\theta'$ onto $\Theta$, which is equivalent to finding the smallest additive perturbation that stabilizes~$\theta'$. More specifically, matrix analysis studies the \textit{nearest stable matrix problem}
\begin{equation}
    \label{equ:standard:proj}
    \Pi_{\Theta}(\theta') \in \arg \min_{\theta\in \mathop{\mathsf{cl}} \Theta} \|\theta' - \theta \|^2,
\end{equation}
where $\|\cdot \|$ represents a prescribed norm on~$\mathbb R^{n\times n}$. Note that optimizing over the closure of~$\Theta$ is necessary for~\eqref{equ:standard:proj} to be well-defined because any minimizer lies on the boundary of the open set~$\Theta$.
Solving \eqref{equ:standard:proj} is challenging because $\Theta$ is non-convex. Existing numerical solution procedures rely on successive convex approximations \autocite{ref:Orbandexivry-13}, on local optimization schemes based on the solution of low-rank matrix differential equations
\autocite{ref:Guglielmi-17} or on an elegant reparametrization of the set of stable matrices, which simplifies the numerics of the projection operation \autocite{ref:Gillis-19,ref:Choudhary-20}. The latter approach was recently used for learning stable systems~\autocite{ref:MamakoukasGillisGrad,ref:Koopman2020}.  
\textcite{ref:NestNonNegMat} solve~\eqref{equ:standard:proj} for certain polyhedral norms and non-negative matrices~$\theta'$, which allows them to find exact solutions. See~\autocite{ref:higham1988matrix} for a general discussion on matrix nearness problems.  

Optimal control offers a promising alternative perspective on problem~\eqref{equ:standard:proj}, which is closely related to the approach advocated in this paper: one could try to design a linear quadratic regulator (LQR) problem whose optimal feedback gain $K^{\star}\in \mathbb{R}^{n\times n}$ renders $\theta'+K^{\star}$ stable. By proposing an LQR objective that is inversely proportional to the sample covariance matrix of the measurement noise, \textcite{ref:Tanaka} show that this idea is indeed valid, but they provide no error analysis or statistical guarantees. 
Using optimal control techniques, \textcite{ref:Topo2020} prove that one can find matrices $K$ that not only render $\theta'+K$ stable but are also structurally equivalent to $\theta'$ (\textit{e.g.,} $\theta'+K$ preserves the null space of $\theta'$). Such a structure-preserving approach seems preferable over the plain nearest stable matrix problem~\eqref{equ:standard:proj}, which merely seeks stability at minimal `cost'. Appealing to the theory of \textit{large deviations}, we will give such approaches a statistical underpinning.

\textbf{Notation.}
For a matrix $A\in\mathbb{C}^{n\times n}$, we denote by $\rho(A)$ the largest absolute eigenvalue 
and by $\kappa(A)$ 
the condition number of $A$. 
For a set $\mathcal{D}\subset \mathbb{R}^n$, we denote by $\mathcal{D}^{\mathsf{c}}$ the complement, by $\mathsf{cl} \, \mathcal{D}$ the closure and by $\mathsf{int}\, \mathcal{D}$ the interior of~$\mathcal{D}$. For a real sequence $\{a_T\}_{T\in\mathbb{N}}$ we use $1\ll a_T\ll T$ to express that $a_T /T \to 0$ and $a_T\to\infty$ as $T\to\infty$. {We also use the soft-$O$ notation $\tilde O(f(T))$ as a shorthand for $O(f(T)\log(T)^c)$ for some~$c\in\mathbb N$, that is, $\tilde O(\cdot)$ ignores polylogarithmic factors.} 

\newpage
\subsection{Contributions}
\label{sec:contributions}
{Throughout the paper we assume that all random objects are defined on a measurable space~$(\Omega, \mathcal{F})$ equipped with a probability measure~$\mathbb{P}_\theta$ that depends parametrically on the \textit{fixed} yet unknown system matrix~$\theta$, and the system equations~\eqref{eq:LTI:system} are assumed to hold~$\mathbb P_\theta$-almost surely; see also the discussion below Assumption~\ref{ass:linear:sys}. The expectation operator with respect to $\mathbb{P}_\theta$ is denoted by $\mathbb{E}_{\theta}[\cdot]$.} Even though the least squares estimator~$\widehat \theta_T$ is strongly consistent and thus converges $\mathbb{P}_\theta$-almost surely to~$\theta$ \autocite{ref:Campi_98}, it differs $\mathbb{P}_\theta$-almost surely from~$\theta$ for any finite~$T$. To quantify estimation errors, we introduce a discrepancy function $I:\Theta'\times \Theta\rightarrow [0,\infty]$ defined through
\begin{equation} \label{eq:rate:function:AR:MDP}
	I(\theta^\prime,  \theta) = \frac{1}{2} \mathsf{tr}\left(S_w^{-1}(\theta^\prime - \theta) S_\theta (\theta^\prime - \theta)^\mathsf{T}\right).
\end{equation}
Here,~{$S_w\succ 0$ stands for the time-independent noise covariance matrix,} and~$S_\theta
$ denotes the covariance matrix of $x_t$ under the stationary state distribution, which exists for~$\theta\in\Theta$ but diverges as~$\theta$ approaches the boundary of $\Theta$; see Figure~\ref{fig:2D:ncvx}($b$). Note that since $S_w\succ 0$ and hence $S_{\theta}\succ 0$, $I(\theta^\prime,  \theta)$ vanishes if and only if $\theta'=\theta$. In this sense $I$ behaves like a distance.
Note, however, that $I(\theta^\prime,  \theta)$ is not symmetric in~$\theta$ and~$\theta'$.
In this paper we propose to use the discrepancy function~\eqref{eq:rate:function:AR:MDP} for projecting an unstable matrix~$\theta'$ onto~$\Theta$. Specifically, we define the \textit{reverse $I$-projection} of any~$\theta'\in\mathbb R^{n\times n}$ as 
\begin{equation}
\label{equ:rev:I:proj}
    \mathcal{P}(\theta') \in \arg\min_{\theta\in\Theta} I(\theta',{\theta}).
\end{equation}
We emphasize that the minimum in~\eqref{equ:rev:I:proj} is always attained even though~$\Theta$ is open. The reason for this is that~$S_\theta$, and thus also~$I(\theta',\theta)$, diverges as $\theta$ approaches the boundary of~$\Theta$; see Proposition~\ref{prop:I_inf} below. Thus, the minimum must be attained inside~$\Theta$.
In fact, as~$I(\theta',{\theta})$ trades off distance against stability, $\mathcal{P}(\theta')$ may not even be close to the boundary of~$\Theta$; see Figure~\ref{fig:maptoI}.  
Moreover, we will see that the discrepancy function~\eqref{eq:rate:function:AR:MDP} has a natural statistical interpretation, which enables us to derive strong statistical guarantees for the reverse $I$-projection of the least squares estimator.

We will actually show that the discrepancy function~\eqref{eq:rate:function:AR:MDP} determines the speed at which the probability of the least squares estimator $\widehat \theta_T$ being sufficiently different from the true system matrix~$\theta$ decays with the sample size~$T$. 
Specifically, we will prove that the transformed estimator $\widehat \vartheta_T =\sqrt{T/a_T}(\widehat{\theta}_T-\theta)+\theta$ satisfies a moderate deviations principle with rate function~\eqref{eq:rate:function:AR:MDP}. By exploiting the relation $I(\widehat \theta_T,\theta)= (a_T/T)I(\widehat{\vartheta}_T,\theta)$, one can then show that the probability density function~$\varrho_{\theta,T}$ of the original least squares estimator $\widehat{\theta}_T$ with respect to the probability measure~$\mathbb P_\theta$ decays exponentially with~$T$, that is,
\begin{equation}
\label{equ:pdf}
    \varrho_{\theta, T}(\widehat{\theta}_T )
    \approx \mathrm{exp}(-I(\widehat{\theta}_T,\theta)\cdot T).
\end{equation}
Thus, the reverse $I$-projection $\mathcal{P}(\widehat{\theta}_T)$ maximizes the right-hand-side of~\eqref{equ:pdf} across all~$\theta\in\Theta$. Therefore, one can interpret $\mathcal{P}(\widehat{\theta}_T)$ as a \textit{maximum likelihood estimator}, that is, the most likely \textit{asymptotically stable model} in view of the data.

\begin{figure}
    \centering
    \includegraphics[angle=270,scale=0.6]{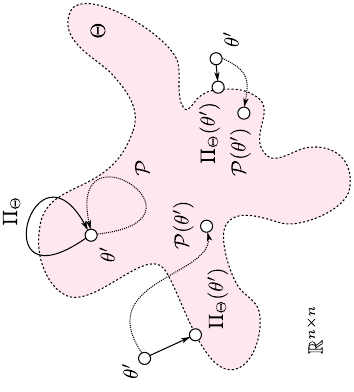}
    \caption{Schematic visualization of $\Pi_{\Theta}(\theta')$ and $\mathcal{P}(\theta')$ for different estimator realizations~$\theta'$ inside and outside of $\Theta$.}
    \label{fig:maptoI}
\end{figure}

Our main contributions can be summarized as follows.

\begin{enumerate}[(i)]
\item We prove that the discrepancy function~\eqref{eq:rate:function:AR:MDP} has a natural statistical interpretation as the rate function of a moderate deviation principle for the transformed least squares estimators~$\sqrt{T/a_T}(\widehat{\theta}_T-\theta)+\theta$, $T\in\mathbb N$.
\item We derive finite-sample and asymptotic statistical error bounds on the operator norm distance between the reverse $I$-projection $\mathcal{P}(\widehat{\theta}_T)$ of the least squares estimator~$\widehat \theta_T$ and the unknown true system matrix~$\theta$.
\item We show that the reverse $I$-projection $\mathcal{P}(\theta')$ can be computed highly efficiently to within any desired accuracy by solving a standard LQR problem, {\em e.g.}, via numerical routines that are readily available in MATLAB or Julia. This method finds the `cheapest' feedback gain matrix~$K^\star$ that renders~$\theta'+K^\star$ stable, and it can evaluate~$\mathcal{P}(\theta')$ in seconds even if~$n\approx 10^3$.
\end{enumerate}
In addition, numerical experiments corroborate our theoretical results and showcase the statistical and computational merits of using the reverse $I$-projection of $\widehat \theta_T$ to estimate~$\theta$. To our best knowledge, we present the first method for the identification of a linear dynamical system with stability guarantees that is both computationally efficient and offers asymptotic consistency and tight statistical error bounds.
The proposed method has been recently exploited to provide the first statistical result on \textit{qualitative} (topological) identification of a linear system~\cite{ref:topoID2021}. We also note that the derivation of the explicit rate function~\eqref{eq:rate:function:AR:MDP} is of independent interest in the context of statistical learning of linear dynamical systems.

\section{Main results} \label{sec:problem:statement}
From now on we impose the following assumption.

\begin{assumption}[Linear system]\label{ass:linear:sys} The following hold.
\begin{enumerate}[(i)]
    \itemsep0em 
	\item \label{ass:lin:sys:stability} The linear system \eqref{eq:LTI:system} is stable, \textit{i.e.,} $\theta\in\Theta$.
	\item \label{ass:lin:sys:exogeneous:noise} For each $\theta\in\Theta$ the disturbances $\{w_t\}_{t\in\mathbb{N}}$ are independent and identically distributed (i.i.d.) and independent of $x_0$ under $\mathbb{P}_\theta$. The marginal noise distributions are unbiased ($\mathbb{E}_{\theta}[w_t]=0$), non-degenerate ($S_w=\mathbb{E}_{\theta}[w_t w_t^\mathsf{T}]\succ 0$ is finite) and have an everywhere positive probability density function.
\end{enumerate}
\end{assumption}

Assumption~\ref{ass:linear:sys} ensures that the linear system~\eqref{eq:LTI:system} admits an invariant distribution~$\nu_\theta$~\autocite[\S~10.5.4]{ref:meyn-09}. 
This means that $x_{t}\sim \nu_\theta$ implies $x_{t+1}\sim \nu_\theta$ for any $t\in\mathbb{N}$. Moreover, as the probability density function of $w_t$ is everywhere positive, $\{x_t\}_{t\in\mathbb{N}}$ represents a uniformly ergodic Markov process, which implies that the marginal distribution of~$x_t$ under~$\mathbb{P}_\theta$ converges weakly to~$\nu_\theta$ as~$t$ tends to infinity~\autocite[Theorems~16.5.1 and~16.2.1]{ref:meyn-09}. 
Assumption~\ref{ass:linear:sys} then implies that the mean vector of~$\nu_\theta$ vanishes and that the covariance matrix~$S_\theta$ of~$\nu_\theta$ coincides with the unique solution of the discrete Lyapunov equation 
\begin{equation}\label{eq:Lyapunov}
S_\theta = \theta S_\theta \theta^\mathsf{T} + S_w,
\end{equation}
which provides for a convenient way to compute $S_{\theta}$; see, {\em e.g.}, \autocite[\S~6.10\,E]{ref:Antsaklis-06}. Recall that $S_\theta$ critically enters the discrepancy function~$I(\theta',\theta)$ defined in~\eqref{eq:rate:function:AR:MDP} and thus also the reverse $I$-projection defined in~\eqref{equ:rev:I:proj}. The following main theorem summarizes the key statistical and computational properties of the reverse $I$-projection that will be proved in the remainder of the paper. This theorem involves the function $\mathsf{dlqr}(A,B,Q,R)$, which outputs the optimal feedback gain matrix of an infinite-horizon deterministic LQR problem in discrete time. Such problems are described by two system matrices~$A$ and~$B$, a state cost matrix~$Q\succeq 0$ and an input cost matrix~$R\succ 0$ of compatible dimensions that satisfy standard stabilizability and detectability conditions~\autocite[\S~4]{ref:BertVolI_05}.

\begin{theorem}[Efficient identification with stability guarantees]\label{thm:eff:stable:identification:alternative}
Suppose that Assumption~\ref{ass:linear:sys} holds, { that the noise is light-tailed as well as stationary} and that $\widehat{\theta}_T$ is the least squares estimator~\eqref{eq:LS:MDP}. Then, for any $\theta\in \Theta$ the reverse $I$-projection defined in \eqref{equ:rev:I:proj} displays the following properties.
\begin{enumerate}[(i)]
    \item \label{item:i:thm} \textbf{Asymptotic consistency.}
    \begin{equation*}
        \lim_{T\to\infty}\mathcal{P}(\widehat{\theta}_T)=\theta \quad \mathbb{P}_\theta\text{-a.s.}
    \end{equation*} 
     \item\label{item:ii:thm} \textbf{Finite sample guarantee.} 
     {
     There are constants $\tau\geq 0$ and $\rho\in(0,1)$ that depend only on~$\theta$ such that
    \begin{equation*}
    \mathbb{P}_{\theta}\left( \|\theta -  \mathcal{P}(\widehat \theta_T) \|_2 \leq \kappa(S_w) \frac{2\varepsilon n^\frac{1}{2}\tau}{\sqrt{1-\rho^2}} \right)\geq 1 -\beta
\end{equation*}
for all $\beta,\varepsilon\in (0,1)$ and $T\geq\kappa(S_w) \widetilde{O}(n) {\log(1/\beta)}/{\varepsilon^2}$.
}
    \item \label{item:iii:thm}\textbf{Efficient computation.} For any $\theta'\notin\Theta$ and $S_w,Q \succ 0$ there is a~$p\ge 1$, such that for all $\delta>0$ we have that
    \begin{equation*}
        \theta^{\star}_{\delta} = \theta' + \mathsf{dlqr}(\theta',I_n,Q,(2\delta S_w)^{-1})
    \end{equation*}
    is stable and satisfies $\|\mathcal{P}(\theta')-\theta^{\star}_{\delta}\|_2 \leq O(\delta^{p})$. 
\end{enumerate}
\end{theorem}
 
The asymptotic consistency \eqref{item:i:thm} formalizes the intuitive requirement that more data is preferable to less data. We emphasize that the reverse $I$-projection does not introduce unnecessary bias because $\mathcal{P}(\theta')=\theta'$ if $\theta'$ is already stable. The finite sample guarantee~\eqref{item:ii:thm} stipulates that the projected least squares estimator $\mathcal{P}(\widehat{\theta}_T)$ is guaranteed to be close to the (unknown) true stable matrix $\theta$ with high probability~$1-\beta$. 
Note that if the observed state trajectory $\{\widehat{x}_t\}_{t=0}^T$ is generated under $\mathbb{P}_\theta$, then the inverse matrix appearing in~\eqref{eq:LS:MDP} exists $\mathbb{P}_\theta$-almost surely for any sample size $T\ge n$ thanks to Assumption~\ref{ass:linear:sys}\,\eqref{ass:lin:sys:exogeneous:noise}. The efficient computability property~\eqref{item:iii:thm}, finally, shows that computing the reverse $I$-projection to within high accuracy is no harder than solving a standard LQR problem. The function~$\mathsf{dlqr}$ is readily available as a standard routine in software packages such as MATLAB or Julia. We also emphasize that setting $Q=I_n$ works well in practice, that is, no tuning is required to compute~$\mathcal P(\theta')$. { However, tuning~$Q$ can nevertheless improve the conditioning of the optimization problem and speed up the computation of~$\mathcal P(\theta')$. Guidelines on choosing~$Q$ and the results of extensive numerical experiments are reported in~\cite{ref:revI2022}.}  Recall from Section~\ref{sec:contributions} that the reverse $I$-projection exhibits optimism in the face of uncertainty, a decision-making paradigm that is used with great success in various reinforcement learning applications~\autocite{ref:lattimore-2020}. In general, however, optimism in the face of uncertainty leads to computational intractability~\autocite{ref:Campi_98}. Thus, the tractability result of  Theorem~\ref{thm:eff:stable:identification:alternative}~\eqref{item:iii:thm} is a perhaps unexpected exception to this rule; see Proposition~\ref{prop:numerical:comp} below for further details.
In the remainder we will prove Theorem~\ref{thm:eff:stable:identification:alternative}. 
The proofs of auxiliary results  are relegated to Section~\ref{sec:appendix} in the appendix. 

\section{Reverse $I$-projection}
\label{sec:rev:I}

We now demonstrate that the discrepancy function~\eqref{eq:rate:function:AR:MDP} underlying the reverse $I$-projection has a natural statistical interpretation, which is crucial for the proof of Theorem~\ref{thm:eff:stable:identification:alternative}.  

\subsection{Moderate Deviations Theory}
\label{sec:moderate:deviations:theory}
We leverage recent results from moderate deviations theory to show that the discrepancy function~\eqref{eq:rate:function:AR:MDP} is intimately related to the least squares estimator~\eqref{eq:LS:MDP}. 
To this end, we first introduce the basic notions of a {\em rate function} and a {\em moderate deviation principle}. For a comprehensive introduction to moderate deviations theory we refer to~\autocite{hollander2008largedeviationis, dembo2009large}. 

\begin{definition}[Rate function]
	\label{def:rate_function:original}
	An extended real-valued function $I:\Theta'\times \Theta\rightarrow [0,\infty]$ is called a rate function if it is lower semi-continuous in its first argument.
\end{definition}

\begin{definition}[Moderate deviation principle]
	\label{def:MDP:general}
	A sequence of estimators $\{\widehat\theta_T\}_{T\in\mathbb{N}}$ is said to satisfy a moderate deviation principle with rate function $I$ if for every sequence $\{a_T\}_{T\in\mathbb{N}}$ of real numbers with $1\ll a_T\ll T$, for every Borel set $\mathcal{D}\subset \Theta'$ and for every $\theta\in\Theta$ { all of the following inequalities hold.}
	\begin{subequations}
		\label{eq:ldp_exponential_rates}
		\begin{align}
		\label{eq:ldp_exponential_rates_lb}
		-\inf_{\theta' \in \mathsf{int}{\mathcal{D}}} \, I(\theta', \theta) \leq& \liminf_{T\to \infty}~\frac{1}{a_T} \log \mathbb{P}_{\theta}\left( \widehat \theta_T \in \mathcal{D} \right) \\
		\leq& \limsup_{T\to \infty}~\frac{1}{a_T} \log \mathbb{P}_\theta\left( \widehat \theta_T \in \mathcal{D} \right)\\ 
		 \leq &-\inf_{\theta' \in \mathsf{cl} {\mathcal{D}}} \,  I(\theta', \theta)
		\label{eq:ldp_exponential_rates_ub}
		\end{align}
	\end{subequations}
\end{definition}
 If the rate function $I(\theta',\theta)$ is continuous in $\theta'$ and the interior of~$\mathcal{D}$ is dense in $\mathcal{D}$, then the infima in~\eqref{eq:ldp_exponential_rates_lb} and~\eqref{eq:ldp_exponential_rates_ub} coincide, which implies that all inequalities in~\eqref{eq:ldp_exponential_rates} collapse to equalities. In this case,~\eqref{eq:ldp_exponential_rates} can be paraphrased as~$\mathbb{P}_\theta(\widehat \theta_T\in\mathcal{D})= e^{-r a_T+o(a_T)}$, where~$r=\inf_{\theta' \in \mathcal{D}} I(\theta', \theta)$ represents the $I$-distance between the system matrix~$\theta$ and the set~$\mathcal{D}$ of estimator realizations. 
 Thus, $r$ represents the decay rate of the probability~$\mathbb{P}_\theta(\widehat \theta_T\in\mathcal{D})$, while~$\{a_T\}_{T\in\mathbb{N}}$ can be viewed as the speed of convergence. The condition~$1\ll a_T\ll T$ is satisfied, for example, if~$a_T=\sqrt{T}$, $T\in\mathbb{N}$. However, many other choices are possible. It is perhaps surprising that if a sequence of estimators satisfies a moderate deviations principle, then the choice of the speed~$\{a_T\}_{T\in\mathbb{N}}$ has no impact on the decay rate~$r$ but may only influence the coefficients of the higher-order terms hidden in $o(a_T)$. We also remark that if the inequalities in~\eqref{eq:ldp_exponential_rates} hold for $a_T=T$, $T\in\mathbb{N}$ (in which case the speed of convergence violates the condition $1\ll a_T\ll T$), then $\{\widehat\theta_T\}_{T\in\mathbb{N}}$ is said to satisfy a {\em large} deviation principle \autocite{dembo2009large}. It is also customary to talk about a moderate deviation principle as being a large deviation principle with reduced speed $\{a_T\}_{T\in \mathbb{N}}$ such that $1\ll a_T \ll T$.

We now show that the transformed least squares estimators
\begin{equation} \label{eq:transformed:LSE}
    \widehat \vartheta_T 
    =\sqrt{T/a_T}(\widehat{\theta}_T-\theta)+\theta
\end{equation}
satisfy a moderate deviation principle, where the discrepancy function~\eqref{eq:rate:function:AR:MDP} plays the role of the rate function. This result relies on another standard regularity condition.

\begin{assumption}[Light-tailed noise and stationarity]\label{ass:model:MDP:regularity} The following hold for every $\theta\in\Theta$.
\begin{enumerate}[(i)]
	\item \label{ass:control:regularity:i} The disturbances $\{w_t\}_{t\in\mathbb{N}}$ are
	light-tailed, \textit{i.e.}, there exists $\alpha >0$ with $\mathbb{E}_{\theta}[e^{\alpha \|w_t\|^2}]< \infty$ for all $t\in\mathbb{N}$.
	\item \label{ass:control:regularity:ii} The initial distribution $\nu$ coincides with the invariant distribution $\nu_\theta$ of the linear system \eqref{eq:LTI:system}.
\end{enumerate}
\end{assumption}

Assumption~\ref{ass:model:MDP:regularity}~\eqref{ass:control:regularity:i} essentially requires the tails of the noise to have no heavier tails than a normal distribution, while Assumption~\ref{ass:model:MDP:regularity}~\eqref{ass:control:regularity:ii} stipulates that the linear system is in the stationary regime already at time~$t=0$.

\begin{proposition}[Moderate deviation principle]
\label{prop:MDP:closed:loop}
	If Assumptions~\ref{ass:linear:sys} and~\ref{ass:model:MDP:regularity} hold, $\{\widehat \theta_T\}_{T\in\mathbb{N}}$ denote the least squares estimators defined in~\eqref{eq:LS:MDP} and  $\{a_T\}_{T\in\mathbb{N}}$ is a real sequence with $1\ll a_T\ll T$, then the transformed least squares estimators
	$\{\sqrt{T / a_T}(\widehat \theta_T - \theta) + \theta\}_{T\in\mathbb{N}}$ satisfy a moderate deviation principle with rate function~\eqref{eq:rate:function:AR:MDP}.
\end{proposition}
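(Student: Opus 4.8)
The plan is to linearize the estimation error, establish a moderate deviation principle (MDP) for an underlying martingale, and then transport it through an invertible linear map by the contraction principle. First I would use the dynamics \eqref{eq:LTI:system}, in the form $\widehat x_t=\theta\widehat x_{t-1}+w_{t-1}$, to split the least squares estimator \eqref{eq:LS:MDP} as
\begin{equation*}
\widehat\theta_T-\theta = M_T\,\widehat S_T^{-1},\qquad M_T=\tfrac1T\textstyle\sum_{t=1}^T w_{t-1}\widehat x_{t-1}^\mathsf{T},\qquad \widehat S_T=\tfrac1T\textstyle\sum_{t=1}^T \widehat x_{t-1}\widehat x_{t-1}^\mathsf{T}.
\end{equation*}
Since $\sqrt{T/a_T}\,M_T=(a_T T)^{-1/2}\sum_{t=1}^T w_{t-1}\widehat x_{t-1}^\mathsf{T}$, the transformed estimator \eqref{eq:transformed:LSE} is, up to the factor $\widehat S_T^{-1}$, exactly the moderate-deviation rescaling of a sum. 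Vectorizing, I would set $\xi_t=\mathrm{vec}(w_{t-1}\widehat x_{t-1}^\mathsf{T})=\widehat x_{t-1}\otimes w_{t-1}$ and study $N_T=\sum_{t=1}^T\xi_t$ relative to the natural filtration $\mathcal F_{t}=\sigma(x_0,w_0,\dots,w_{t-1})$. Because $w_{t-1}$ is mean-zero and independent of $\mathcal F_{t-1}$ by Assumption~\ref{ass:linear:sys}\,\eqref{ass:lin:sys:exogeneous:noise}, while $\widehat x_{t-1}$ is $\mathcal F_{t-1}$-measurable, $\{\xi_t\}$ is a martingale difference sequence with predictable conditional covariance $\mathbb{E}_\theta[\xi_t\xi_t^\mathsf{T}\mid\mathcal F_{t-1}]=(\widehat x_{t-1}\widehat x_{t-1}^\mathsf{T})\otimes S_w$.

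The core step is an MDP for $(a_T T)^{-1/2}N_T$ at speed $a_T$, for which I would invoke a moderate-deviation theorem for martingales (of Puhalskii/Dembo type). Its hypotheses reduce to two facts. First, the normalized bracket converges, $\tfrac1T\langle N\rangle_T=\widehat S_T\otimes S_w\to S_\theta\otimes S_w$, which follows from the uniform ergodicity of $\{x_t\}$ guaranteed by Assumption~\ref{ass:linear:sys} together with the stationarity of Assumption~\ref{ass:model:MDP:regularity}\,\eqref{ass:control:regularity:ii} via the ergodic theorem, and which in fact must be promoted to a super-exponential statement at speed $a_T$, i.e.\ $\limsup_T a_T^{-1}\log\mathbb{P}_\theta(\|\widehat S_T-S_\theta\|_2>\delta)=-\infty$. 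Second, the increments must be exponentially negligible in a Lindeberg sense at the moderate scale; here the light-tail Assumption~\ref{ass:model:MDP:regularity}\,\eqref{ass:control:regularity:i} is essential, since $\mathbb{E}_\theta[e^{\alpha\|w_t\|^2}]<\infty$ propagates through the stationary representation $x_t=\sum_{k\ge0}\theta^k w_{t-1-k}$ to yield sub-exponential tails for the products $\xi_t$, which is exactly what a truncation argument requires. These ingredients deliver an MDP for $(a_T T)^{-1/2}N_T$ with the Gaussian rate function $z\mapsto\frac12 z^\mathsf{T}(S_\theta\otimes S_w)^{-1}z=\frac12 z^\mathsf{T}(S_\theta^{-1}\otimes S_w^{-1})z$.

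It then remains to reinstate $\widehat S_T^{-1}$ and reorganize the rate function. Using the super-exponential concentration of $\widehat S_T$ above, I would argue that $\sqrt{T/a_T}(\widehat\theta_T-\theta)$ is exponentially equivalent to $\sqrt{T/a_T}\,M_T\,S_\theta^{-1}$, so that by the exponential-equivalence principle the two obey the same MDP. Vectorizing gives $\mathrm{vec}(M_TS_\theta^{-1})=(S_\theta^{-1}\otimes I_n)\,\mathrm{vec}(M_T)$, exhibiting the transformed estimator as the image of $(a_T T)^{-1/2}N_T$ under the invertible map $L=S_\theta^{-1}\otimes I_n$. The contraction principle then yields the rate function $y\mapsto\frac12(L^{-1}y)^\mathsf{T}(S_\theta^{-1}\otimes S_w^{-1})(L^{-1}y)$, and with $L^{-1}=S_\theta\otimes I_n$ and $(S_\theta\otimes I)(S_\theta^{-1}\otimes S_w^{-1})(S_\theta\otimes I)=S_\theta\otimes S_w^{-1}$ this collapses to $\frac12\,\mathrm{vec}(\theta'-\theta)^\mathsf{T}(S_\theta\otimes S_w^{-1})\mathrm{vec}(\theta'-\theta)$ upon writing $y=\mathrm{vec}(\theta'-\theta)$. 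Finally the identity $\mathrm{vec}(A)^\mathsf{T}(B\otimes C)\mathrm{vec}(A)=\mathrm{tr}(A^\mathsf{T}CAB^\mathsf{T})$, the symmetry of $S_\theta$, and cyclicity of the trace turn this (with $A=\theta'-\theta$, $B=S_\theta$, $C=S_w^{-1}$) into $\frac12\,\mathrm{tr}(S_w^{-1}(\theta'-\theta)S_\theta(\theta'-\theta)^\mathsf{T})=I(\theta',\theta)$, matching \eqref{eq:rate:function:AR:MDP}; the lower-semicontinuity demanded by Definition~\ref{def:rate_function:original} is immediate as $I$ is a continuous quadratic form in $\theta'$.

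I expect the main obstacle to be the martingale MDP itself rather than the contraction algebra: one must verify the \emph{super-exponential} forms of both the bracket convergence and the jump-negligibility condition for an unbounded, merely light-tailed, and only stationary increment sequence. The two delicate points are propagating the light-tail bound from $w_t$ to the products $\xi_t$ and to $\widehat S_T$, and confirming that every "convergence at the moderate scale" statement holds with the super-exponential rate that the exponential-equivalence and bracket-convergence hypotheses require.
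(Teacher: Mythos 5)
Your outline is essentially correct, but it takes a genuinely different route from the paper. The paper does not re-derive the moderate deviation principle from scratch: it invokes Proposition~2.2 of \textcite{ref:Yu-09} (which rests on the martingale arguments of \textcite{ref:worms1999}) to obtain an MDP whose rate function is given only implicitly as a variational problem $\sup_L \{\langle L,\theta'-\theta\rangle-\tfrac12\mathbb{E}_\theta[\langle L,w_1x_0^\mathsf{T}S_\theta^{-1}\rangle^2]\}$, and then its two actual contributions are (a) solving that unconstrained quadratic maximization in closed form to recover the explicit trace formula~\eqref{eq:rate:function:AR:MDP}, and (b) extending the result from the operator-norm condition $\|\theta\|_2<1$ required by Yu to the spectral condition $\rho(\theta)<1$, via a Lyapunov change of coordinates $\bar x_t=P^{1/2}x_t$ that makes the transformed system matrix contractive, followed by the contraction principle to pull the MDP back. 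Your approach instead opens the black box: the decomposition $\widehat\theta_T-\theta=M_T\widehat S_T^{-1}$, the martingale MDP for the rescaled sum, exponential equivalence to replace $\widehat S_T^{-1}$ by $S_\theta^{-1}$, and the Kronecker contraction step are precisely the skeleton of the Worms/Yu proof, and your algebra producing $\tfrac12\,\mathrm{tr}(S_w^{-1}(\theta'-\theta)S_\theta(\theta'-\theta)^\mathsf{T})$ is correct and arrives at the explicit rate function directly, bypassing the variational form altogether. What you should be aware of is that the ``main obstacle'' you defer --- the super-exponential bracket convergence and Lindeberg negligibility for unbounded, light-tailed, merely stationary increments --- is exactly where the existing literature imposes $\|\theta\|_2<1$ rather than $\rho(\theta)<1$; your sketch asserts these estimates under spectral stability alone without addressing why the constants in $\|\theta^k\|_2\le C\gamma^k$ (with possibly large $C$) do not spoil them. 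The paper sidesteps this by proving the result only for contractive $\theta$ and then transporting it with a similarity transformation; if you carry out your program directly you must either replicate that trick or verify the super-exponential estimates under $\rho(\theta)<1$ yourself, which is a nontrivial addition to what you have written.
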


Unlike the standard least squares estimators~\eqref{eq:LS:MDP}, the transformed estimators of Proposition~\ref{prop:MDP:closed:loop} depend on the unknown parameter~$\theta$. However, as we will explain below, they are useful for theoretical considerations. Proposition~\ref{prop:MDP:closed:loop} can be viewed as a corollary of~\autocite[Theorem~2.1]{ref:Yu-09}, which uses ideas from \autocite{ref:worms1999} to show that the transformed least squares estimators satisfy a moderate deviation principle with a rate function that is defined implicitly in variational form. Proposition~\ref{prop:MDP:closed:loop} shows that this rate function admits the explicit representation~\eqref{eq:rate:function:AR:MDP} and allows for showing~\eqref{equ:pdf}. It also relaxes the restrictive  condition~$\|\theta\|_2<1$ from~\autocite[Proposition~2.2]{ref:Yu-09} to~$\rho(\theta)<1$.

By identifying the discrepancy function~\eqref{eq:rate:function:AR:MDP} with the rate function of a moderate deviation principle, Proposition~\ref{prop:MDP:closed:loop} justifies our terminology, whereby~$\mathcal P(\theta')$ is called the reverse $I$-projection of~$\theta'$. Indeed, \textcite{ref:csiszar-03} use this term to denote any projection with respect to an information divergence~$I(\theta',\theta)$. Note that swapping the arguments $\theta'$ and $\theta$ of the (asymmetric) function~$I(\theta',\theta)$ would give rise to an ordinary $I$-\textit{projection}~\autocite{ref:csiszar-84}. Proposition~\ref{prop:MDP:closed:loop} also suggests that the reverse $I$-projection is intimately related to maximum likelihood estimation, as already alluded to in the introduction. Indeed, for i.i.d.\ training data it is well-known that every maximum likelihood estimator can be regarded as a reverse $I$-projection with respect to the rate function of some large deviation principle \autocite[Lemma 3.1]{ref:book:Csiszar-04}.

The power of Proposition~\ref{prop:MDP:closed:loop} lies in its generality. Indeed, a moderate deviation principle provides tight bounds on the probability of \textit{any} Borel set of estimator realizations. A simple direct application of the moderate deviation principle established in Proposition~\ref{prop:MDP:closed:loop} is described below.

\begin{example}[System identification]
\upshape{
Consider a scalar system with $S_w=1$ that satisfies Assumptions~\ref{ass:linear:sys} and~\ref{ass:model:MDP:regularity}. In this case ~$\Theta=(-1,1)$ with the rate function~\eqref{eq:rate:function:AR:MDP} reducing to~$I(\theta',\theta)=\frac{1}{2}(\theta'-\theta)^2/(1-\theta^2)$. Using the least squares estimators~\eqref{eq:LS:MDP} to identify~$\theta$, Proposition~\ref{prop:MDP:closed:loop} reveals that
\begin{align*}
    \mathbb{P}_{\theta}(|\widehat{\theta}_T-\theta|\! &> \varepsilon \sqrt{a_T/T})\\
    &= \mathbb{P}_{\theta}(\theta+\sqrt{T/a_T}(\widehat{\theta}_T-\theta)\in\mathcal D) \\
    & = \textstyle \exp \left(- \inf_{\theta'\in\mathcal D} I(\theta',\theta)\cdot a_T +o(a_T)\right) \\
    &= \textstyle \exp\left( -\frac{1}{2}\varepsilon^2\,a_T/(1-\theta^2) +o(a_T)\right)
\end{align*}
for any $\varepsilon>0$ and $T\in\mathbb{N}$, where $\mathcal D=\{\theta'\in\mathbb{R}: |\theta'-\theta|>\varepsilon\}$. This result confirms the counterintuitive insight of~\textcite{ref:Sim_18} whereby stable systems with $|\theta|\approx 1$ are easier to identify than systems with $|\theta|\approx 0$.
}
\end{example}

{
\begin{remark}[Invariance under noise scaling]
The rate function~$I$ is invariant under any strictly positive scaling of the noise covariance matrix~$S_w$. This implies that if the state is one-dimensional or~$S_w$ is known to be isotropic, then~$I$ is independent of~$S_w$. For a proof see~\cite[Example~II.9]{ref:topoID2021}. 
\end{remark}
}

The moderate deviation principle established in Proposition~\ref{prop:MDP:closed:loop} also enables us to find statistically optimal data-driven decisions for stochastic optimization problems, where the underlying probability measure is only indirectly observable through finitely many training samples. Indeed, \textcite{ref:vanParys:fromdata-17} and \textcite{ref:Sutter-19} show that such optimal decisions can be found by solving data-driven distributionally robust optimization problems.

Next, we establish several structural properties of the rate function~\eqref{eq:rate:function:AR:MDP}.

\begin{proposition}[Properties of~$I(\theta',\theta)$]
	\label{prop:I_inf}
	The rate function~$I(\theta',\theta)$ defined in~\eqref{eq:rate:function:AR:MDP} has the following properties.
	\begin{enumerate}[(i)]
	    \item\label{prop:I_inf:cont} $I(\theta',\theta)$ is analytic in~$(\theta' ,\theta)\in\Theta'\times \Theta$. 
	    \item\label{prop:I_inf:compact} If~$\theta'\in\Theta$, then the sublevel set~$\{\theta\in\Theta:I(\theta', \theta)\leq r\}$ is compact for every~$r\ge 0$. 
	    \item If~$\theta'\in\Theta$, then $I(\theta',\theta)$ tends to infinity as~$\theta$ approaches the boundary of~$\Theta$. 
	\end{enumerate}
\end{proposition}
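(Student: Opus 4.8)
The plan is to dispatch analyticity first and then reduce the two divergence statements to a single coercivity lemma. For part~\eqref{prop:I_inf:cont}, I would vectorize the Lyapunov equation~\eqref{eq:Lyapunov} to obtain $\mathrm{vec}(S_\theta)=(I_{n^2}-\theta\otimes\theta)^{-1}\mathrm{vec}(S_w)$, which is legitimate on $\Theta$ because the eigenvalues of $\theta\otimes\theta$ are the products $\lambda_i(\theta)\lambda_j(\theta)$ and hence have modulus at most $\rho(\theta)^2<1$, so $\det(I_{n^2}-\theta\otimes\theta)\neq 0$. By Cramer's rule the entries of the inverse are rational functions of the entries of $\theta$ with nowhere-vanishing denominator, so $\theta\mapsto S_\theta$ is real-analytic on $\Theta$. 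Since $I(\theta',\theta)$ in~\eqref{eq:rate:function:AR:MDP} is a polynomial in the entries of $\theta'$ and of $S_\theta$, composing with this analytic map shows that $I$ is analytic on $\Theta'\times\Theta$.

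For parts~\eqref{prop:I_inf:compact} and~(iii) I would observe that both follow from one coercivity lemma: for fixed $\theta'\in\Theta$, $I(\theta',\theta_k)\to\infty$ whenever $\{\theta_k\}\subset\Theta$ leaves every compact subset of $\Theta$, i.e.\ either $\|\theta_k\|\to\infty$ or $\theta_k\to\bar\theta$ with $\rho(\bar\theta)=1$. The unbounded case is easy: \eqref{eq:Lyapunov} gives $S_\theta=S_w+\theta S_\theta\theta^\mathsf{T}\succeq S_w$, so $2I(\theta',\theta)\ge\mathrm{tr}(S_w^{-1}(\theta'-\theta)S_w(\theta'-\theta)^\mathsf{T})\ge\tfrac{\lambda_{\min}(S_w)}{\lambda_{\max}(S_w)}\|\theta'-\theta\|_F^2$, which forces $I\to\infty$ as $\|\theta\|\to\infty$ and bounds $\|\theta'-\theta\|_F$ on any sublevel set. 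Together with continuity of $\rho$ this yields the compactness in~\eqref{prop:I_inf:compact} once the boundary case is handled, and that boundary case is exactly statement~(iii).

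The heart of the matter is thus $\theta_k\to\bar\theta$ with $\rho(\bar\theta)=1$. Writing $2I(\theta',\theta_k)=\mathrm{tr}(S_{\theta_k}N_k)$ with $N_k=(\theta'-\theta_k)^\mathsf{T}S_w^{-1}(\theta'-\theta_k)\succeq 0$, I would argue by contradiction and assume $I(\theta',\theta_k)\le C$. A left eigenvector $u_k$ of $\theta_k$ with $|\mu_k|=\rho(\theta_k)$ gives the exact identity $u_k^*S_{\theta_k}u_k=u_k^*S_w u_k/(1-|\mu_k|^2)$, so $\sigma_k:=\lambda_{\max}(S_{\theta_k})\to\infty$. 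Passing to a subsequence, set $G=\lim S_{\theta_k}/\sigma_k\succeq 0$, which has $\|G\|_2=1$ and, dividing~\eqref{eq:Lyapunov} by $\sigma_k$ and taking limits, satisfies the homogeneous Stein equation $G=\bar\theta G\bar\theta^\mathsf{T}$. On $V:=\mathrm{range}(G)\neq\{0\}$ one checks $\mathrm{range}(G)\subseteq\bar\theta\,\mathrm{range}(G)$, so $V$ is $\bar\theta$-invariant and $A:=\bar\theta|_V$ is invertible; compressing the Stein equation to $V$ gives $G|_V=A\,(G|_V)\,A^*$ with $G|_V\succ0$, whence $A^*$ is similar to $A^{-1}$ and every eigenvalue of $A$ is forced onto the unit circle. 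Finally, boundedness of $\mathrm{tr}(S_{\theta_k}N_k)$ yields $\|S_w^{-1/2}(\theta'-\theta_k)q\|^2\le 2C/\sigma(q)$ for every eigenvector $q$ of $S_{\theta_k}$ with eigenvalue $\sigma(q)$; letting $\sigma(q)\to\infty$ along directions spanning $V$ gives $(\theta'-\bar\theta)|_V=0$. Then $\theta'|_V=A$ has a unimodular eigenvalue while $V$ is $\theta'$-invariant, contradicting $\rho(\theta')<1$.

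I expect the main obstacle to be precisely this non-normal, possibly Jordan-block regime: the top eigenvectors of $S_{\theta_k}$ need not converge to honest eigenvectors of $\bar\theta$, and $S_{\theta_k}$ itself diverges, so one cannot pass to the limit in $S_{\theta_k}^{1/2}\theta_k^\mathsf{T}S_{\theta_k}^{-1/2}$ directly. The device that resolves this is to normalize by $\sigma_k$ \emph{before} taking limits, which annihilates the $S_w$ term and produces the clean equation $G=\bar\theta G\bar\theta^\mathsf{T}$; the similarity $A^*\sim A^{-1}$ then delivers unimodularity of the entire relevant spectrum with no eigenvalue-simplicity assumption. The only remaining care is the routine compactness bookkeeping, namely choosing convergent orthonormal eigenbases along subsequences and verifying that the positive-eigenvalue directions of $G$ are spanned by limits of genuinely diverging eigenvectors, which is delicate to phrase but presents no real difficulty.
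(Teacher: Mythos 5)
Your proof is correct, but the route you take through the boundary case is genuinely different from the paper's. For analyticity the paper simply invokes the known analyticity of $\theta\mapsto S_\theta$ (Lemma~\ref{lem:dlyap_cont}); your Kronecker-product and Cramer's-rule derivation of $\mathrm{vec}(S_\theta)=(I_{n^2}-\theta\otimes\theta)^{-1}\mathrm{vec}(S_w)$ makes that step self-contained, and your reduction of parts~(ii) and~(iii) to a single coercivity lemma plus the bound $S_\theta\succeq S_w$ matches the paper's Step~2 exactly. Where you differ is the divergence at the boundary: the paper argues \emph{directly}, using the series $S_\theta=\sum_{k}\theta^k S_w(\theta^k)^{\mathsf{T}}$ and a normalized eigenvector $v_k$ of $\theta_k$ for its dominant eigenvalue $\lambda_k$ to obtain the explicit lower bound $2I(\theta',\theta_k)\ge \beta\,\|(\theta'-\theta_k)v_k\|^2/(1-|\lambda_k|^2)$; along a convergent subsequence $(\lambda_k,v_k)\to(\lambda,v)$ with $|\lambda|=1$ the numerator tends to $\|(\lambda I-\theta')v\|^2$, which is strictly positive precisely because $\theta'$ is stable, and the blow-up follows in a few lines. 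Your contradiction argument via the normalized limit $G=\lim S_{\theta_k}/\sigma_k$, the homogeneous Stein equation $G=\bar\theta G\bar\theta^{\mathsf{T}}$ and the invariant subspace $V=\mathrm{range}(G)$ is heavier machinery but is sound --- note that the similarity $A^*\sim A^{-1}$ together with invertibility of $A$ already forces $\rho(A)\ge 1$, so you do not even need full unimodularity of the spectrum to reach the contradiction with $\rho(\theta')<1$ --- and it buys strictly more: it exhibits the entire subspace on which $\theta'$ would have to coincide with the boundary limit $\bar\theta$, rather than a single eigendirection. The price is the subsequence bookkeeping for the eigenbases of $S_{\theta_k}$ that you rightly flag as the delicate point; the paper's single-eigenvector bound sidesteps it entirely.
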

Proposition~\ref{prop:I_inf}~\eqref{prop:I_inf:compact} guarantees that the minimum in~\eqref{equ:rev:I:proj} is indeed attained and that the reverse $I$-projection is well-defined. 
To close this section, we present a useful relation between the rate function $I$ and the operator norm. 
\begin{lemma}[Pinsker-type inequality]
\label{lem:rate:to:norm}
For any~$\theta'\in\Theta'$ and $\theta\in \Theta$ we have $\|\theta'-\theta\|_2^2 \leq {2\kappa(S_w)\cdot I(\theta',\theta)}$.
\end{lemma}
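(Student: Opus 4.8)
The plan is to write $\Delta = \theta'-\theta$ and bound the quadratic form defining $I$ from below by $\|\Delta\|_2^2$ times an explicit constant, using only two ingredients: elementary eigenvalue inequalities for traces of products of positive semidefinite matrices, and the fact that the stationary covariance dominates the noise covariance. Rewriting the rate function, the goal is the chain
\begin{equation*}
 I(\theta',\theta) = \tfrac12\,\mathrm{tr}\!\left(S_w^{-1}\Delta\, S_\theta\, \Delta^\mathsf{T}\right) \;\ge\; \frac{1}{2\kappa(S_w)}\,\|\Delta\|_2^2 ,
\end{equation*}
from which the claim follows by rearranging. So everything reduces to lower-bounding the trace on the left.

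First I would record the auxiliary fact $S_\theta\succeq S_w$, which is immediate from the discrete Lyapunov equation~\eqref{eq:Lyapunov}: since $S_\theta = \theta S_\theta\theta^\mathsf{T} + S_w$ and $\theta S_\theta\theta^\mathsf{T}\succeq 0$, we get $S_\theta - S_w\succeq 0$, and in particular $\lambda_{\min}(S_\theta)\ge \lambda_{\min}(S_w)$. Next I would peel off the two symmetric positive definite factors one at a time. Using that $S_w^{-1}\succeq \lambda_{\max}(S_w)^{-1}I_n$ together with $\Delta S_\theta\Delta^\mathsf{T}\succeq 0$, the scalar inequality $\mathrm{tr}(MN)\ge \lambda_{\min}(M)\,\mathrm{tr}(N)$ (valid for symmetric $M$ and $N\succeq 0$, since $\mathrm{tr}((M-\lambda_{\min}(M)I_n)N)\ge 0$ as the trace of a product of two positive semidefinite matrices) gives
\begin{equation*}
 \mathrm{tr}\!\left(S_w^{-1}\Delta S_\theta\Delta^\mathsf{T}\right) \;\ge\; \frac{1}{\lambda_{\max}(S_w)}\,\mathrm{tr}\!\left(\Delta S_\theta\Delta^\mathsf{T}\right).
\end{equation*}
Applying cyclicity of the trace, the same inequality once more with $M=S_\theta$, $N=\Delta^\mathsf{T}\Delta\succeq 0$, and the bound $\lambda_{\min}(S_\theta)\ge\lambda_{\min}(S_w)$ yields
\begin{equation*}
 \mathrm{tr}\!\left(\Delta S_\theta\Delta^\mathsf{T}\right) = \mathrm{tr}\!\left(S_\theta\Delta^\mathsf{T}\Delta\right) \;\ge\; \lambda_{\min}(S_\theta)\,\|\Delta\|_F^2 \;\ge\; \lambda_{\min}(S_w)\,\|\Delta\|_F^2 .
\end{equation*}

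Finally I would use the standard norm comparison $\|\Delta\|_F^2 = \mathrm{tr}(\Delta^\mathsf{T}\Delta)\ge \|\Delta\|_2^2$ (the operator norm is the largest singular value, the Frobenius norm the square root of the sum of squared singular values) and assemble the pieces, obtaining $I(\theta',\theta)\ge \tfrac12\,\lambda_{\min}(S_w)/\lambda_{\max}(S_w)\cdot\|\Delta\|_2^2 = \|\Delta\|_2^2/(2\kappa(S_w))$, since $\kappa(S_w)=\lambda_{\max}(S_w)/\lambda_{\min}(S_w)$ for the symmetric positive definite matrix $S_w$. The argument is entirely self-contained and needs no deep input; the only point requiring slight care is getting the direction of the two trace inequalities right and correctly tracking which extreme eigenvalue of $S_w$ appears, so that the two factors combine precisely into the condition number $\kappa(S_w)$ rather than a looser constant.
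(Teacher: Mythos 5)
Your proof is correct and follows essentially the same route as the paper's: both lower-bound the trace by pulling out $\sigma_{\min}(S_w^{-1})=1/\lambda_{\max}(S_w)$ and $\lambda_{\min}(S_\theta)\ge\lambda_{\min}(S_w)$ (the latter via $S_\theta\succeq S_w$ from the Lyapunov equation) and then use $\|\Delta\|_F\ge\|\Delta\|_2$. You merely spell out the intermediate trace inequalities that the paper compresses into a single two-line estimate.
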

Lemma~\ref{lem:rate:to:norm} provides a direct link between the nearest stable matrix problem~\eqref{equ:standard:proj} and the reverse $I$-projection~\eqref{equ:rev:I:proj} as
\begin{equation*}
    \inf_{{\theta}\in \Theta}\|\theta'-{\theta}\|_2^2\leq 2\kappa(S_w) \cdot I(\theta',\mathcal{P}(\theta')).
\end{equation*}

\subsection{Statistics of the reverse $I$-projection} \label{ssec:stability:guarantees}

In the following we apply the reverse $I$-projection to the least squares estimator~$\widehat \theta_T$ and {\em not} to the transformed least squares estimator $\widehat{\vartheta}_T$ defined in~\eqref{eq:transformed:LSE} (which is anyway unaccessible because~$\theta$ is unknown), even though Proposition~\ref{prop:MDP:closed:loop} relates $I$ to $\widehat{\vartheta}_T$. However, an elementary calculation shows that $I(\widehat \theta_T,\theta)= (a_T/T)I(\widehat{\vartheta}_T,\theta)$, and thus $I(\widehat \theta_T,\theta)$ inherits any statistical interpretations from $I(\widehat{\vartheta}_T,\theta)$. 

{We first show that $\mathcal P(\widehat \theta_T)$ is asymptotically consistent.}
\begin{proposition}[Asymptotic consistency]
\label{prop:consistency}
Suppose that Assumption~\ref{ass:linear:sys} holds and that $\widehat{\theta}_T$ is the least squares estimator. Then, for any $\theta\in \Theta$ the reverse $I$-projection $\mathcal{P}(\widehat{\theta}_T)$ of $\widehat{\theta}_T$ satisfies $\lim_{T\to\infty}\mathcal{P}(\widehat{\theta}_T)=\theta \quad \mathbb{P}_\theta\text{-a.s.}$.
\end{proposition}

Next, we can use the results of Section~\ref{sec:moderate:deviations:theory} to establish probabilistic bounds on the operator norm distance between the projected least squares estimator
$\mathcal{P}(\widehat \theta_T)$ and the unknown true system matrix~$\theta$ with respect to the data-generating probability measure~$\mathbb P_\theta$. 
Specifically, the following lemma provides {two {\em implicit}} finite-sample bounds { involving random error estimates. These bounds} 
are both structurally identical to existing (and in some cases statistically optimal) finite-sample bounds for~$\widehat{\theta}_T$; see, {\em e.g.}, \autocite[\S~6]{ref:sarkar19a}. { In Proposition~\ref{prop:finite:sample} below, these implicit bounds will be used to establish {\em explicit} finite sample bounds involving deterministic error estimates.}

{
\begin{lemma}[Implicit finite sample bounds]
\label{lem:learn:stab}
Suppose that Assumptions~\ref{ass:linear:sys} and~\ref{ass:model:MDP:regularity} hold and that $\widehat{\theta}_T$ and $\mathcal P(\widehat{\theta}_T)$ represent the least squares estimator and its reverse $I$-projection, respectively. Setting~$\widehat{\varepsilon}_T =  [2\kappa(S_w)\cdot I(\widehat\theta_T,\mathcal{P}(\widehat \theta_T))]^{1/2}$, we then have $\|\widehat{\theta}_T-\mathcal{P}(\widehat{\theta}_T)\|_2 \leq \widehat{\varepsilon}_T$ $\mathbb P_\theta$-almost surely. In addition, the following finite sample bounds hold for all $\beta,\varepsilon\in (0,1)$.
\begin{subequations}
\label{equ:lem:error:bounds}
\begin{enumerate}[(i)]
    \item We have
    \begin{equation}
\label{equ:error:finite:sample}
    \mathbb{P}_{\theta}\left( \|\theta -  \mathcal{P}(\widehat \theta_T) \|_2 \leq \varepsilon+{\widehat{\varepsilon}_T}  \right)\geq 1 -\beta
\end{equation}
for all $T\in\mathbb N$ with $T \geq  \kappa(S_w)\widetilde{O}(n)\log(1/\beta)/\varepsilon^2$.
\item\label{stat:asymp} If $\{a_T\}_{T\in\mathbb N}$ is a real sequence satisfying $1\ll a_T\ll T$, then we have
\begin{equation}
\label{equ:error:asymptotic}
    \mathbb{P}_{\theta}\left( \|\theta -  \mathcal{P}(\widehat \theta_T) \|_2 \leq \varepsilon \sqrt{{a_T}/{T}} + \widehat{\varepsilon}_T \right)\geq 1 -\beta
\end{equation}
for all $T\in \mathbb{N}$ with ${a_T}\geq 2 \kappa(S_w) (\log(1/\beta)+o(a_T))/\varepsilon^2$.
\end{enumerate}
\end{subequations}
\end{lemma}
}

{ Note that the finite sample bound~\eqref{equ:error:finite:sample}, which leverages sophisticated results from \autocite[\S~6]{ref:sarkar19a}, and the bound~\eqref{equ:error:asymptotic}, which follows almost immediately from the moderate deviations principle of Section~\ref{sec:moderate:deviations:theory}, are qualitatively similar. They both hold for all $T$ that exceed a critical sample size depending on an unknown deterministic function of the order~$\widetilde{O}(n)$
or~$o(a_T)$, respectively. Both bounds also involve a random error estimate~$\widehat\varepsilon_T$. As $\widehat \theta_T$ as well as $\mathcal P(\widehat \theta_T)$ converge $\mathbb P_\theta$-almost surely to~$\theta\in\Theta$, and as $I$ is continuous in both of its arguments, it is easy to show that the random variable~$\widehat{\varepsilon}_T$ as defined in Proposition~\ref{lem:learn:stab} converges $\mathbb P_\theta$-almost surely to~$0$ as $T$ grows. Therefore, the bounds~\eqref{equ:error:finite:sample} and~\eqref{equ:error:asymptotic} improve with~$T$.
As the inequalities in~\eqref{eq:ldp_exponential_rates} are asymptotically tight, we conjecture that the bound~\eqref{equ:error:asymptotic} is statistically optimal.

In the following we will show that the implicit finite sample bounds of Lemma~\ref{lem:learn:stab} can be used to derive explicit finite sample bounds involving deterministic error estimates. To this end, we recall a more nuanced quantitative notion of stability.

\begin{definition}[$(\tau,\rho)$-stability~{\cite[Definition~1]{ref:Krauth-19}}]
\label{def:tau-rho-stability}
    We say that the system matrix $\theta \in \Theta$ is $(\tau,\rho)$-stable for some $\tau\geq 1$ and $\rho\in (0,1)$ if $\|\theta^k\|_2\leq \tau \rho^k$ for all $k\in \mathbb{N}$. 
\end{definition}
We emphasize that any stable matrix~$\theta\in \Theta$ is in fact $(\tau,\rho)$-stable for some $\tau\geq 1$ and $\rho\in (0,1)$. If $\theta$ is diagonalizable with spectral decomposition~$\theta=T\Lambda T^{-1}$, for example, then $\|\theta^k\|=\|T\Lambda^k T^{-1}\|\leq \kappa(T)\rho(\theta)^k$, which implies that $\theta$ is $(\tau,\rho)$ stable for $\tau=\kappa(T)$ and $\rho=\rho(\theta)$. If $\theta$ is not diagonalizable, a similar but more involved argument together with a change of coordinates similar to the one from proof of Proposition~\ref{prop:MDP:closed:loop} can be used to show $(\tau,\rho)$-stability.

\begin{proposition}[Explicit finite sample bounds]
\label{prop:finite:sample}
Suppose that Assumptions~\ref{ass:linear:sys} and~\ref{ass:model:MDP:regularity} hold and that $\widehat{\theta}_T$ and $\mathcal P(\widehat{\theta}_T)$ are the least squares estimator and its reverse $I$-projection, respectively. 
The following finite sample bounds hold for all $\beta,\varepsilon\in (0,1)$ and for all parameters $\tau\geq 1$ and $\rho\in (0,1)$ such that $\theta$ is $(\tau,\rho)$-stable, which are guaranteed to exist.
\begin{subequations}
\label{equ:prop:error:bounds:2}
\begin{enumerate}[(i)]
\item We have
    \begin{equation*}
    \mathbb{P}_{\theta}\left( \|\theta -  \mathcal{P}(\widehat \theta_T) \|_2 \leq \kappa(S_w) \frac{2\varepsilon n^{\frac{1}{2}}\tau}{\sqrt{1-\rho^2}} \right)\geq 1 -\beta
\end{equation*}
for all $T\in\mathbb N$ with $T \geq  \kappa(S_w)\widetilde{O}(n)\log(1/\beta)/\varepsilon^2$.
\item\label{stat:asymp:2} If $\{a_T\}_{T\in\mathbb N}$ is a real sequence satisfying $1\ll a_T\ll T$ and $T\in \mathbb{N}$, then we have
\begin{equation*}
\mathbb{P}_{\theta}\left( \|\theta -  \mathcal{P}(\widehat \theta_T) \|_2 \leq \kappa(S_w) \frac{2\varepsilon n^\frac{1}{2}\tau}{\sqrt{1-\rho^2}}\sqrt{\frac{a_T}{T}}\right)\geq 1 -\beta
\end{equation*}
for all $T\in \mathbb{N}$ with ${a_T}\geq 2 \kappa(S_w) (\log(1/\beta)+o(a_T))/\varepsilon^2$.
\end{enumerate}
\end{subequations}
\end{proposition}

The explicit finite-sample bounds of Proposition~\ref{prop:finite:sample} refine the implicit bounds of Lemma~\ref{lem:learn:stab} and notably expose the dependence of the approximation error on the stability parameters $\tau$ and $\rho$. Of course, these parameters are unknown under our standing assumption that~$\theta$ is unknown, as such we cannot adapt the projection~\eqref{equ:rev:I:proj} to incorporate $(\tau,\rho)$-stability. In contrast, the implicit finite-sample bounds of Lemma~\ref{lem:learn:stab} involve approximation errors that are random but known.
}

\subsection{Computation of the reverse $I$-projection}
\label{sec:num:I:proj}
We now address the numerical computation of $\mathcal{P}(\theta')$ as defined in~\eqref{equ:rev:I:proj} for any given estimator realization $\theta'\in \Theta'$. To this end, we fix $Q\succ 0$ and show that solving~\eqref{equ:rev:I:proj} is equivalent to finding a minimizer of the optimization problem
\begin{equation}
\label{equ:jr}
    \min_{\theta\in \Theta}\{ \mathsf{tr}(Q S_{\theta}): I(\theta',\theta) \leq r\}
\end{equation}
for the smallest radius~$r=\underline r$ that renders~\eqref{equ:jr} feasible. Note that $\underline r$ exists because the optimal value of~\eqref{equ:jr} is lower semi-continuous in~$r$. In addition, problem~\eqref{equ:jr} admits a minimizer for any $r\ge \underline r$ due to Proposition~\ref{prop:I_inf}~\eqref{prop:I_inf:compact}. The proposed procedure works because if $\theta'$ is unstable, then any $\theta$ feasible in~\eqref{equ:jr} is stable, and its $I$-distance to~$\theta'$ is at most~$r$. Setting~$r=\underline r$ thus ensures that any minimizer of~\eqref{equ:jr} is a reverse $I$-projection of $\theta'$ and that $I(\theta',\mathcal P(\theta'))=\underline r$. Moreover, the proposed procedure is computationally attractive because we will prove below that~\eqref{equ:jr} is equivalent to a standard LQR problem. We emphasize that the exact choice of~$Q$ has no effect on the validity and hardly any effect on the numerical performance of this procedure. 

\begin{proposition}[Reformulation of~\eqref{equ:rev:I:proj}]
\label{prop:proxy}
If $\theta'\in \Theta'$, $Q\succ 0$ and $\underline r$ is the smallest $r \geq 0$ for which~\eqref{equ:jr} is feasible, then any minimizer of~\eqref{equ:jr} at $r=\underline r$ is a reverse $I$-projection. 
\end{proposition}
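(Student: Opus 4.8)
The plan is to exploit the fact that at the critical radius $r=\underline r$ the inequality constraint $I(\theta',\theta)\le r$ in~\eqref{equ:jr} becomes so tight that it alone pins down the reverse $I$-projection, rendering the objective $\mathsf{tr}(QS_\theta)$ irrelevant to the conclusion. In other words, I would argue that \emph{every} feasible point of~\eqref{equ:jr} at $r=\underline r$, and hence a fortiori every minimizer (however the objective breaks ties), already minimizes $I(\theta',\cdot)$ over $\Theta$ and is therefore a reverse $I$-projection in the sense of~\eqref{equ:rev:I:proj}.

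First I would reformulate the definition of $\underline r$ as two pointwise statements about $I(\theta',\cdot)$. Feasibility of~\eqref{equ:jr} at radius $r$ is by definition the nonemptiness of the sublevel set $\{\theta\in\Theta:I(\theta',\theta)\le r\}$. Since $\underline r$ is the \emph{smallest} feasible radius, feasibility at $r=\underline r$ furnishes a point $\theta_0\in\Theta$ with $I(\theta',\theta_0)\le\underline r$, while infeasibility at every $r<\underline r$ means $I(\theta',\theta)>r$ for all $\theta\in\Theta$; letting $r\uparrow\underline r$ gives the uniform lower bound $I(\theta',\theta)\ge\underline r$ for all $\theta\in\Theta$. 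Putting the two together yields $\underline r=\min_{\theta\in\Theta}I(\theta',\theta)$ with the minimum attained. Consequently the feasible set of~\eqref{equ:jr} at $r=\underline r$ collapses to the level set
\begin{equation*}
\{\theta\in\Theta:I(\theta',\theta)\le\underline r\}=\{\theta\in\Theta:I(\theta',\theta)=\underline r\}=\arg\min_{\theta\in\Theta}I(\theta',\theta),
\end{equation*}
which is exactly the set of reverse $I$-projections of $\theta'$. Any minimizer of~\eqref{equ:jr} at $r=\underline r$ is in particular feasible, hence belongs to this set, which establishes the claim.

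The above logic is elementary and takes the existence of $\underline r$---equivalently, the attainment of $\inf_{\theta\in\Theta}I(\theta',\theta)$---for granted, just as the statement does. I expect the only genuinely substantive point, which underlies that assumption and which I would record for completeness, to be the coercivity of $I(\theta',\cdot)$ over the open and unbounded set $\Theta$ for a possibly \emph{unstable} $\theta'$. Here Lemma~\ref{lem:rate:to:norm} confines each sublevel set $\{\theta\in\Theta:I(\theta',\theta)\le r\}$ to the bounded operator-norm ball $\{\theta:\|\theta'-\theta\|_2\le\sqrt{2\kappa(S_w)\,r}\}$, while the divergence of $S_\theta$ (and thus of $I(\theta',\theta)$) as $\theta$ approaches $\partial\Theta$ keeps minimizing sequences away from the boundary; compactness of the sublevel sets, in the spirit of Proposition~\ref{prop:I_inf}~\eqref{prop:I_inf:compact}, then guarantees attainment and hence the existence of $\underline r$. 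Establishing this boundary divergence for unstable $\theta'$ is the main---and essentially the only---obstacle; the remainder of the argument is purely order-theoretic.
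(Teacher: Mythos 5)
Your proof is correct and follows essentially the same route as the paper's (much terser) argument: feasibility at the critical radius $\underline r$ already forces membership in $\arg\min_{\theta\in\Theta} I(\theta',\theta)$, so the objective $\mathsf{tr}(QS_\theta)$ merely breaks ties within the set of reverse $I$-projections. Your closing remark that attainment of the infimum for unstable $\theta'$ is the only substantive point is apt; the paper handles this in the discussion preceding the proposition via lower semi-continuity and the compactness of sublevel sets rather than inside the proof itself.
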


Note that if $r\ge \overline r=I(\theta',0)$, then problem~\eqref{equ:jr} has the trivial solution $\theta=0$, and its optimal value reduces to $\mathsf{tr}(QS_w)$.\footnote{One readily verifies that $\overline r=\frac{1}{2}\|S_w^{-1/2}\theta'S_w^{1/2}\|_F^2$, where $\|\cdot\|_{F}$ stands for the Frobenius norm.} In this case, the rate constraint is not binding at optimality. If $r<\overline r$, on the other hand, then problem~\eqref{equ:jr} is infeasible for $r<\underline r$ and admits a quasi-closed form solution for $r> \underline r$ as explained in the following proposition.

\begin{proposition}[Optimal solution of~\eqref{equ:jr}]
\label{prop:opt:sol}
	\label{prop:jr:stable}
	Suppose that Assumption~\ref{ass:linear:sys} holds. Then, for every $\theta'\notin\Theta$ there exists an analytic function $\varphi:(\underline r, \overline r)\rightarrow(0,\infty)$ that is increasing and bijective such that the following hold for all~$r\in(\underline r,\overline r)$.

	\begin{enumerate}[(i)]
	    \item For any $\delta\in(0,\infty)$ the matrix $P_{\delta}\in \mathcal{S}^n$ is the unique positive definite solution of the Riccati equation
	\begin{equation}
	\label{equ:P_ARE:best}
	\begin{aligned}
	P_{{\delta}} =&\; Q + {\theta'}^\mathsf{T} P_{{\delta}} \left(I_n+2\delta S_w P_{{\delta}}\right) ^{-1}  \theta'.
	\end{aligned} 
	\end{equation}
	\item The matrix $\theta_{\delta}^{\star}=(I_n+2\delta S_w P_{\delta})^{-1}\theta'$ is the unique solution of problem~\eqref{equ:jr} at $r=\varphi^{-1}(\delta)$, and the rate constraint is binding at optimality, i.e., $I(\theta',\theta^\star_\delta)=r$.
	\end{enumerate}

\end{proposition}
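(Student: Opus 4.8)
The plan is to prove Proposition~\ref{prop:opt:sol} by exhibiting problem~\eqref{equ:jr} as a parametric optimization problem in~$\theta$ and analyzing its first-order optimality conditions through a Lagrangian, thereby connecting it to the Riccati equation~\eqref{equ:P_ARE:best}. First I would fix $r\in(\underline r,\overline r)$ and form the Lagrangian $L(\theta,\delta)=\mathsf{tr}(QS_\theta)+\delta\,(I(\theta',\theta)-r)$, where $\delta\ge 0$ is the multiplier on the rate constraint and $S_\theta$ is understood implicitly through the Lyapunov equation~\eqref{eq:Lyapunov}. Because $I(\theta',\theta)$ blows up at $\partial\Theta$ (Proposition~\ref{prop:I_inf}), the minimizer lies in the open set~$\Theta$, so I can look for interior stationary points. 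The main technical ingredient is to differentiate $\mathsf{tr}(QS_\theta)$ and $I(\theta',\theta)$ with respect to~$\theta$, using the fact that $S_\theta$ solves the Lyapunov equation. I would introduce the adjoint/co-state matrix $P$ as the unique positive-definite solution of the dual Lyapunov equation $P=Q+\theta^\mathsf T P\theta$ and use the standard identity $\tfrac{\partial}{\partial\theta}\mathsf{tr}(QS_\theta)=2P\theta S_\theta$ (and an analogous expression for the gradient of the rate term), so that setting the gradient of~$L$ to zero yields a matrix equation relating $\theta$, $\theta'$, $P$, $S_w$ and~$\delta$.

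Next I would solve the stationarity condition explicitly for~$\theta$. The first-order condition should take the form $(I_n+2\delta S_w P)\theta=\theta'$, which, upon noting that $P$ appears via the closed-loop system~$\theta$, couples back into a Riccati-type fixed point. Substituting the resulting expression $\theta=(I_n+2\delta S_w P)^{-1}\theta'$ into the Lyapunov equation for~$P$ gives precisely~\eqref{equ:P_ARE:best}; so the two claims of the proposition are two faces of the same stationarity system. I would then verify that for every $\delta\in(0,\infty)$ the Riccati equation~\eqref{equ:P_ARE:best} admits a unique positive-definite solution~$P_\delta$, invoking standard LQR existence and uniqueness theory (the stabilizability/detectability hypotheses are met since $B=I_n$ is trivially stabilizing and $Q\succ 0$ is detectable), which establishes part~(i). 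Part~(ii) then follows by defining $\theta^\star_\delta=(I_n+2\delta S_w P_\delta)^{-1}\theta'$, checking it is stable (the closed-loop matrix of the associated LQR is stable), and confirming it is the \emph{unique} global minimizer, which requires more than stationarity.

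For the uniqueness and global-optimality claim I would argue via convexity of the problem in a suitable parametrization, or via the strict monotonicity that the map $\varphi$ is meant to encode. Concretely, I would \emph{define} $\varphi$ by $\varphi^{-1}(\delta)=I(\theta',\theta^\star_\delta)$, i.e.\ as the rate attained at the stationary point indexed by~$\delta$, and then show that $\delta\mapsto I(\theta',\theta^\star_\delta)$ is analytic, strictly increasing and bijective from $(0,\infty)$ onto $(\underline r,\overline r)$. Monotonicity should follow from a sensitivity/envelope argument: increasing the multiplier~$\delta$ tightens the effective penalty on the rate, pushing $\theta^\star_\delta$ toward the origin and thus increasing $I(\theta',\theta^\star_\delta)$ while decreasing $\mathsf{tr}(QS_{\theta^\star_\delta})$. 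Analyticity of $\varphi$ inherits from analyticity of $P_\delta$ in~$\delta$ (the implicit function theorem applied to the Riccati equation, whose Fréchet derivative is invertible by the stability of the closed loop) combined with the analyticity of~$I$ from Proposition~\ref{prop:I_inf}\,\eqref{prop:I_inf:cont}. The boundary behaviour $\varphi^{-1}(\delta)\to\underline r$ as $\delta\downarrow 0$ and $\varphi^{-1}(\delta)\to\overline r$ as $\delta\uparrow\infty$ pins down the range.

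The hardest part will be establishing global optimality and uniqueness rather than mere stationarity, since problem~\eqref{equ:jr} is a constrained minimization over the nonconvex set~$\Theta$ with a nonconvex objective $\mathsf{tr}(QS_\theta)$ and a nonconvex constraint set $\{I(\theta',\theta)\le r\}$. I expect the cleanest route is to recognize that the Lagrangian, for fixed $\delta$, is exactly the objective of an unconstrained infinite-horizon LQR problem in disguise (with the feedback gain playing the role of $\theta-\theta'$), whose optimal cost-to-go is governed by the Riccati equation~\eqref{equ:P_ARE:best} and whose unique optimizer is the stabilizing feedback; standard LQR theory then delivers uniqueness and global optimality of $\theta^\star_\delta$ for the Lagrangian, and strong duality (guaranteed because the rate constraint is binding and $\delta>0$) transfers this to~\eqref{equ:jr}. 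Verifying that strong duality holds and that the binding multiplier is strictly positive on $(\underline r,\overline r)$—so that the KKT point is genuinely the minimizer—is the delicate step that ties the whole argument together.
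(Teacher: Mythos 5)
Your final paragraph is essentially the paper's proof: the penalized problem $\min_\theta \mathsf{tr}(QS_\theta)+I(\theta',\theta)/\delta$ is rewritten, via $L=\theta-\theta'$, as an average-cost LQR problem with state cost $Q$ and input cost $(2\delta S_w)^{-1}$, whose unique stabilizing solution is given by the Riccati equation~\eqref{equ:P_ARE:best}, and optimality is transferred back to the constrained problem~\eqref{equ:jr}. The main difference is how that transfer is done. You anticipate needing strong duality and a KKT argument, and you correctly flag this as delicate given the nonconvexity of~$\Theta$, of $\mathsf{tr}(QS_\theta)$ and of the constraint set. The paper sidesteps all of this with an elementary exact-constraint-relaxation lemma (Lemma~\ref{lem:relax_max}): a two-line exchange argument shows that if the penalized minimizer $x_2^\star(\delta)$ satisfies $g(x_2^\star(\delta))=r$ exactly, then it solves the constrained problem at level~$r$, and moreover that $\delta\mapsto g(x_2^\star(\delta))$ is monotone --- no convexity, no duality gap discussion, no multiplier sign analysis. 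This is also how the paper obtains the monotonicity of $\varphi^{-1}$, rather than via your sensitivity/envelope argument. Your route is viable but heavier than necessary.

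Two slips in your sketch are worth correcting. First, the co-state you introduce, the solution of $P=Q+\theta^\mathsf{T}P\theta$, is the gradient adjoint of $\mathsf{tr}(QS_\theta)$ \emph{alone}; the matrix $P$ that actually appears in the stationarity condition $(I_n+2\delta S_wP)\theta=\theta'$ is the cost-to-go of the \emph{combined} cost $Q+L^\mathsf{T}(2\delta S_w)^{-1}L$ along the closed loop (equivalently, the Riccati solution), because the rate term $I(\theta',\theta)$ also depends on $\theta$ through $S_\theta$ and contributes to the adjoint. Second, your monotonicity heuristic is internally inconsistent: with the Lagrangian written as $\mathsf{tr}(QS_\theta)+\delta\,(I(\theta',\theta)-r)$, increasing $\delta$ pushes $\theta^\star$ toward $\theta'$ and \emph{decreases} $I(\theta',\theta^\star)$; it is the paper's parametrization $f+g/\delta$ (input cost $(2\delta S_w)^{-1}$) for which increasing $\delta$ pushes $\theta^\star_\delta$ toward the origin and increases $I$, with $\varphi^{-1}(\delta)\to\overline r=I(\theta',0)$ as $\delta\uparrow\infty$ and $\varphi^{-1}(\delta)\to\underline r$ as $\delta\downarrow 0$. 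Since your derived first-order condition matches the latter convention, you have mixed the two parametrizations; once you fix one, the rest of your argument (analyticity of $P_\delta$ via the implicit function theorem, hence of $\varphi$, and uniqueness inherited from uniqueness of the LQR solution) goes through and agrees with the paper.
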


We have seen that evaluating~$\mathcal P(\theta')$ is equivalent to solving~\eqref{equ:jr} at $r=\underline r$. Unfortunately, $\underline r$ is unknown, and Proposition~\ref{prop:jr:stable} only characterizes solutions of~\eqref{equ:jr} for $r>\underline r$. However, by the properties of~$\varphi$ established in Proposition~\ref{prop:jr:stable}, we also have $\lim_{r\downarrow \underline r}\varphi(r)=0$, which is equivalent to $\lim_{\delta\downarrow 0}\varphi^{-1}(\delta)= \underline r$. A standard continuity argument therefore implies that $\lim_{\delta\downarrow 0} \theta^\star_\delta$ solves~\eqref{equ:jr} at $r=\underline r$. In practice, we may simply set $\delta$ to a small positive number and compute~$\theta^\star_\delta$ by solving~\eqref{equ:P_ARE:best} to find a high-accuracy approximation for the reverse $I$-projection~$\mathcal P(\theta')$.

\begin{proposition}[Computing the reverse $I$-projection]\label{prop:numerical:comp}
If Assumption~\ref{ass:linear:sys} holds, $\theta'\notin \Theta$ and $Q\succ 0$, then there exists a $p\ge 1$ such that for all $\delta>0$ the matrix $\theta^{\star}_{\delta}$ from Proposition~\ref{prop:jr:stable}\,(ii) is stable and satisfies
\begin{equation}
\label{equ:poly:error}
    \|\mathcal{P}(\theta')-\theta^{\star}_{\delta}\|_2 = O(\delta^{p} ).
\end{equation}
In addition, $\theta^{\star}_{\delta}$ can be computed as
\begin{equation*}
    \theta^{\star}_{\delta} = \theta' + \mathsf{dlqr}(\theta',I_n,Q,(2\delta S_w)^{-1}),
\end{equation*}
where the standard LQR routine\footnote{See \url{https://juliacontrol.github.io/ControlSystems.jl/latest/examples/example/\#LQR-design} for example.
} $\mathsf{dlqr}(\cdot)$ has time and memory complexity of the order $O(n^3)$ and $O(n^2)$, respectively. 
\end{proposition}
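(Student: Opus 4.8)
The plan is to treat the four assertions separately: (a) the closed-form identity $\theta^{\star}_{\delta} = \theta' + \mathsf{dlqr}(\theta',I_n,Q,(2\delta S_w)^{-1})$, (b) stability of $\theta^{\star}_{\delta}$, (c) the complexity bounds, and (d) the convergence rate $\|\mathcal{P}(\theta')-\theta^{\star}_{\delta}\|_2 = O(\delta^{p})$ with $p\ge 1$. Items (a)--(c) are bookkeeping once the correspondence with a standard LQR problem is in place, so the real work is (d).

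For (a) I would show that the Riccati equation~\eqref{equ:P_ARE:best} is exactly the discrete-time algebraic Riccati equation of the LQR problem with data $(A,B,Q,R)=(\theta',I_n,Q,(2\delta S_w)^{-1})$. Writing $R=(2\delta S_w)^{-1}$ and using the elementary identity $P-P(R+P)^{-1}P = P(R+P)^{-1}R = P(I_n+2\delta S_w P)^{-1}$, the standard Riccati equation $P = Q + {\theta'}^{\mathsf{T}}P\theta' - {\theta'}^{\mathsf{T}}P(R+P)^{-1}P\theta'$ collapses to precisely~\eqref{equ:P_ARE:best}. Since $B=I_n$ the pair $(\theta',I_n)$ is controllable and, as $Q\succ 0$, the pair $(\theta',Q^{1/2})$ is observable, so the stabilizability and detectability hypotheses hold; hence $\mathsf{dlqr}$ returns the unique stabilizing solution, which coincides with the unique positive definite $P_{\delta}$ of Proposition~\ref{prop:opt:sol}(i). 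Under the convention in which the gain is added to~$\theta'$, the optimal gain is $K^{\star}=-(R+P_{\delta})^{-1}P_{\delta}\theta'$, whence $\theta'+K^{\star}=(I_n+2\delta S_w P_{\delta})^{-1}\theta' = \theta^{\star}_{\delta}$, proving (a). This simultaneously gives (b), since the closed-loop matrix of a stabilizing LQR solution is stable (equivalently, $\theta^{\star}_{\delta}$ is feasible in~\eqref{equ:jr} and hence lies in~$\Theta$). For (c) I would invoke that solving an $n$-dimensional Riccati equation, e.g.\ via the ordered Schur form of the associated $2n\times 2n$ pencil, costs $O(n^3)$ time and $O(n^2)$ memory.

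The crux is (d). The difficulty is that, because $\theta'$ is unstable, $P_{\delta}\to\infty$ as $\delta\downarrow 0$ (the limiting equation $P=Q+{\theta'}^{\mathsf{T}}P\theta'$ has no finite positive definite solution), so a naive expansion of $P_{\delta}$ around $\delta=0$ is impossible. I would therefore pass to the bounded variable $Y_{\delta}:=2\delta P_{\delta}$, for which multiplying~\eqref{equ:P_ARE:best} by $2\delta$ gives the fixed-point equation $Y = 2\delta Q + {\theta'}^{\mathsf{T}}Y(I_n+S_w Y)^{-1}\theta'$ together with the representation $\theta^{\star}_{\delta}=(I_n+S_w Y_{\delta})^{-1}\theta'$. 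Here $Y_{\delta}$ stays bounded and converges to a finite limit $Y_0$ solving $Y={\theta'}^{\mathsf{T}}Y(I_n+S_w Y)^{-1}\theta'$, with $(I_n+S_w Y_0)^{-1}\theta' = \mathcal{P}(\theta')$ by the continuity argument already used after Proposition~\ref{prop:opt:sol}. Clearing the inverse turns this into a polynomial system in $(\delta,Y)$, so $Y_{\delta}$ is a branch of an algebraic matrix function of~$\delta$; by the Puiseux theorem it admits a fractional power-series expansion near $\delta=0$, and since $\theta^{\star}_{\delta}$ is a rational function of $Y_{\delta}$ that is regular at $Y_0$ (because $I_n+S_w Y_0$ is invertible), this already yields $\|\mathcal{P}(\theta')-\theta^{\star}_{\delta}\|_2 = O(\delta^{p})$ for some rational $p>0$.

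To upgrade this to $p\ge 1$ I would apply the analytic implicit function theorem to $\Phi(\delta,Y):=Y-2\delta Q-{\theta'}^{\mathsf{T}}Y(I_n+S_w Y)^{-1}\theta'$ at $(0,Y_0)$. The map $\Phi$ is real-analytic there since $I_n+S_w Y_0$ is invertible, and $\Phi(0,Y_0)=0$. Its partial derivative $D_Y\Phi(0,Y_0)$ is a Sylvester/Stein-type operator on symmetric matrices whose invertibility is governed by the spectrum of the closed-loop matrix $\theta^{\star}_0=\mathcal{P}(\theta')$; it is nonsingular whenever $\lambda_i\lambda_j\ne 1$ for all eigenvalues $\lambda_i$ of $\mathcal{P}(\theta')$, which holds because $\mathcal{P}(\theta')\in\Theta$ is strictly stable, so $|\lambda_i\lambda_j|<1$. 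Nonsingularity of $D_Y\Phi(0,Y_0)$ then makes $\delta\mapsto Y_{\delta}$ analytic at $\delta=0$, giving an honest expansion $Y_{\delta}=Y_0+O(\delta)$ and therefore $\theta^{\star}_{\delta}=\mathcal{P}(\theta')+O(\delta)$, i.e.\ $p\ge 1$ (with $p$ larger exactly when the first-order term vanishes). I expect the main obstacle to be precisely this last step: controlling the blow-up of $P_{\delta}$ through the change of variables and verifying that the linearized operator is nonsingular, where the strict stability of $\mathcal{P}(\theta')$ guaranteed by membership in the open set~$\Theta$ does the decisive work.
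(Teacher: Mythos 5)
Your proposal is correct in substance, and for the crucial error-rate claim it takes a genuinely different and more self-contained route than the paper. Parts (a)--(c) coincide with what the paper does: its proof of Proposition~\ref{prop:opt:sol} derives exactly your gain $L_\delta=-(P_\delta+(2\delta S_w)^{-1})^{-1}P_\delta\theta'$ from the average-cost LQR problem~\eqref{equ:best:DP} and uses the matrix inversion lemma to collapse the standard DARE to~\eqref{equ:P_ARE:best}, and the complexity claim is likewise settled by citing the QZ/Schur approach. Where you diverge is~(d). The paper's argument is very terse: it invokes $\lim_{\delta\downarrow 0}\theta^\star_\delta=\mathcal P(\theta')$ from Proposition~\ref{prop:opt:sol}, then asserts that combining the Pinsker-type Lemma~\ref{lem:rate:to:norm} with real-analyticity of $\delta\mapsto I(\theta',\theta^\star_\delta)$ on the \emph{open} interval $(0,\infty)$ yields~\eqref{equ:poly:error}. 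As stated, that does not by itself force $p\ge 1$ (analyticity on $(0,\infty)$ plus continuity at $0$ is compatible with, say, $\sqrt{\delta}$ behaviour, and Lemma~\ref{lem:rate:to:norm} controls $\|\theta'-\theta\|_2$, not $\|\mathcal P(\theta')-\theta^\star_\delta\|_2$). Your rescaling $Y_\delta=2\delta P_\delta$, the a priori bound $Y(I_n+S_wY)^{-1}\preceq S_w^{-1}$ implicit in your setup, and the analytic implicit function theorem at $(0,Y_0)$ supply precisely the missing analyticity at $\delta=0$: your linearization $D_Y\Phi(0,Y_0)[H]=H-\mathcal P(\theta')^{\mathsf T}H\,\mathcal P(\theta')$ is indeed the Stein operator of $\mathcal P(\theta')$, nonsingular because $\mathcal P(\theta')$ lies in the open set $\Theta$, so $Y_\delta=Y_0+O(\delta)$ and hence $p\ge 1$. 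Two small points to tidy up: you should justify that $Y_\delta$ converges to a limit $Y_0$ satisfying $(I_n+S_wY_0)^{-1}\theta'=\mathcal P(\theta')$ even when $\theta'$ is singular (boundedness plus subsequential limits of the fixed-point equation does this), and note that local uniqueness from the implicit function theorem is what identifies $Y_\delta$ with the analytic branch for small $\delta>0$. With those remarks your argument is, if anything, a sharper proof of~\eqref{equ:poly:error} than the one in the paper.
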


\begin{corollary}[$\mathcal P(\theta')$ and $\theta^\star_\delta$ preserve the structure of $\theta'$]
\label{cor:struc}
For any $\theta'\in \Theta'$ there exist invertible matrices $\Lambda,\Lambda_{\delta}\in \mathbb{R}^{n\times n}$ such that $\mathcal{P}(\theta')=\Lambda^{-1}\theta'$ and $\theta^{\star}_{\delta}=\Lambda^{-1}_{\delta}\theta'$.    
\end{corollary}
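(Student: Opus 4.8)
The plan is to treat the two factorizations separately, since the one for $\theta^\star_\delta$ is essentially a restatement of Proposition~\ref{prop:opt:sol} while the one for $\mathcal{P}(\theta')$ requires a limiting argument. For $\theta^\star_\delta$, Proposition~\ref{prop:opt:sol} already supplies $\theta^\star_\delta = (I_n + 2\delta S_w P_\delta)^{-1}\theta'$, where $P_\delta\succ 0$ is the unique positive definite solution of the Riccati equation~\eqref{equ:P_ARE:best}. I would therefore set $\Lambda_\delta = I_n + 2\delta S_w P_\delta$ and verify that it is invertible. This is immediate from a spectral argument: since $S_w\succ 0$ and $P_\delta\succ 0$, the matrix $S_w P_\delta = S_w^{1/2}(S_w^{1/2}P_\delta S_w^{1/2})S_w^{-1/2}$ is similar to the positive definite matrix $S_w^{1/2}P_\delta S_w^{1/2}$ and hence has strictly positive real eigenvalues. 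Consequently the eigenvalues of $\Lambda_\delta$ are of the form $1 + 2\delta\mu$ with $\mu>0$ and $\delta>0$, so $\Lambda_\delta$ has no zero eigenvalue and is invertible, giving $\theta^\star_\delta = \Lambda_\delta^{-1}\theta'$.

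For $\mathcal{P}(\theta')$ the case $\theta'\in\Theta$ is trivial, because then $\mathcal{P}(\theta') = \theta'$ and one may take $\Lambda = I_n$. So assume $\theta'\notin\Theta$. Here I would exploit that, by Proposition~\ref{prop:numerical:comp}, $\theta^\star_\delta\to\mathcal{P}(\theta')$ as $\delta\downarrow 0$, and combine this with the factorization $\theta^\star_\delta = \Lambda_\delta^{-1}\theta'$ just obtained. If I can show that $\Lambda_\delta$ converges, at least along a subsequence $\delta_k\downarrow 0$, to an invertible limit $\Lambda$, then passing to the limit in $\theta^\star_\delta = \Lambda_\delta^{-1}\theta'$ and using continuity of matrix inversion at invertible points yields $\mathcal{P}(\theta') = \Lambda^{-1}\theta'$, as required. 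To analyse the limit I would rescale the Riccati equation: writing $G_\delta = 2\delta P_\delta\succeq 0$, equation~\eqref{equ:P_ARE:best} becomes $G_\delta = 2\delta Q + {\theta'}^{\mathsf{T}}G_\delta(I_n + S_w G_\delta)^{-1}\theta'$ and $\Lambda_\delta = I_n + S_w G_\delta$. Any accumulation point $G_0\succeq 0$ of $\{G_\delta\}$ then gives $\Lambda = I_n + S_w G_0$, whose eigenvalues are at least $1$ by the same similarity argument as above, hence invertible.

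The crux of the proof is therefore to show that $\{G_\delta = 2\delta P_\delta\}$ remains bounded as $\delta\downarrow 0$; equivalently, that the reverse $I$-projection does not lower the rank of $\theta'$. One inclusion comes for free: since $\Lambda_\delta$ is invertible, $\ker\theta^\star_\delta = \ker\theta'$ for every $\delta>0$, and letting $\delta\downarrow 0$ shows $\ker\theta'\subseteq\ker\mathcal{P}(\theta')$. An invertible $\Lambda$ with $\mathcal{P}(\theta') = \Lambda^{-1}\theta'$ exists precisely when this inclusion is an equality, i.e.\ when $\mathrm{rank}(\mathcal{P}(\theta')) = \mathrm{rank}(\theta')$; the difficulty is ruling out a rank drop in the limit, which corresponds exactly to $G_\delta$ blowing up in a direction meeting the range of~$\theta'$. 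I expect this to be the main obstacle and would attack it through the rescaled Riccati equation above, using monotonicity of the Riccati operator in $\delta$ together with a uniform bound on the minimum-energy stabilizing feedback for the pair $(\theta',I_n)$ to control $G_\delta$; the analyticity and boundedness statements established in Propositions~\ref{prop:opt:sol} and~\ref{prop:numerical:comp} should furnish the required uniform estimates. Once boundedness is in hand, a compactness argument extracts the convergent subsequence, the invertible limit $\Lambda$ follows, and the factorization $\mathcal{P}(\theta') = \Lambda^{-1}\theta'$ is established.
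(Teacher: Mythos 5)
Your treatment of $\Lambda_\delta$ is exactly the paper's: both of you read off $\Lambda_\delta=I_n+2\delta S_w P_\delta$ from Proposition~\ref{prop:opt:sol} and get invertibility from the similarity $S_wP_\delta=S_w^{1/2}\bigl(S_w^{1/2}P_\delta S_w^{1/2}\bigr)S_w^{-1/2}$, which forces the eigenvalues of $\Lambda_\delta$ to be of the form $1+2\delta\mu$ with $\mu>0$. That half is complete. For the factorization of $\mathcal{P}(\theta')$, however, your argument is a plan rather than a proof: you correctly isolate the crux --- that $G_\delta=2\delta P_\delta$ must stay bounded as $\delta\downarrow 0$, so that $\Lambda_\delta$ accumulates at some $\Lambda=I_n+S_wG_0$ with eigenvalues at least $1$ --- but you do not establish boundedness; you only say that monotonicity of the Riccati operator and a minimum-energy bound ``should furnish'' it. Since this is precisely the step that rules out a rank drop in the limit (i.e.\ the actual content of the corollary), the proposal as written has a genuine gap.

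That said, the gap is the same one the paper's own proof glosses over: the paper simply asserts $\lim_{\delta\downarrow 0}\det(\Lambda_\delta)>0$ from positivity of the eigenvalues of $S_wP_\delta$ and analyticity of $P_\delta$ on the \emph{open} interval $(0,\infty)$, neither of which guarantees that the limit exists. Your rescaled equation $G_\delta=2\delta Q+{\theta'}^{\mathsf{T}}G_\delta(I_n+S_wG_\delta)^{-1}\theta'$ is in fact the right tool to close it, and the missing step is short: $G_\delta$ is the stabilizing Riccati solution for state cost $2\delta Q$ and input weight $S_w^{-1}$, and the stabilizing solution is monotone nondecreasing in the state-cost matrix, so $0\preceq G_{\delta}\preceq G_{\delta_0}$ for all $0<\delta\le\delta_0$; monotone bounded convergence then gives $G_\delta\to G_0\succeq 0$, the eigenvalues of $\Lambda=I_n+S_wG_0$ are at least $1$ by your similarity argument, and continuity of matrix inversion at invertible points together with $\theta^\star_\delta\to\mathcal{P}(\theta')$ (Proposition~\ref{prop:numerical:comp}) yields $\mathcal{P}(\theta')=\Lambda^{-1}\theta'$. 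With that paragraph added your proof would be complete --- and, on this point, more rigorous than the one in the paper.
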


Corollary~\ref{cor:struc} implies, among other things, that the reverse $I$-projection preserves the kernel of $\theta'$, see~\autocite{ref:Topo2020} for more information. {Combining Theorem~\ref{thm:eff:stable:identification:alternative} and Corollary~\ref{cor:struc} also facilitates topological linear system identification~\cite{ref:topoID2021}.} \textcite{ref:guglielmi2018closest} have shown that if $\theta'=\alpha \mathbbold{1}_{n\times n}$ with $\alpha\in [\frac{1}{n},\frac{2}{n}]$, then the solution of the closest stable matrix problem~\eqref{equ:standard:proj} with respect to the Frobenius norm is $\Pi_\Theta(\theta')=\frac{1}{n}\mathbbold{1}_{n\times n}$, which lies on the boundary of~$\Theta$. A simple calculation further shows that $\mathcal{P}(\theta')=\frac{1}{2n}\mathbbold{1}_{n\times n}$, which lies in the interior of $\Theta$ and has the same structure as~$\theta'$ and~$\Pi_\Theta(\theta')$, thus exemplifying Corollary~\ref{cor:struc}. For $\alpha>\frac{2}{n}$ problem~\eqref{equ:standard:proj} appears to have many local minima, while the reverse $I$-projection remains unique as well as structurally similar to~$\theta'$.

\begin{figure*}[h!]
    \centering
    \begin{subfigure}[b]{0.22\textwidth}
        \includegraphics[width=\textwidth]{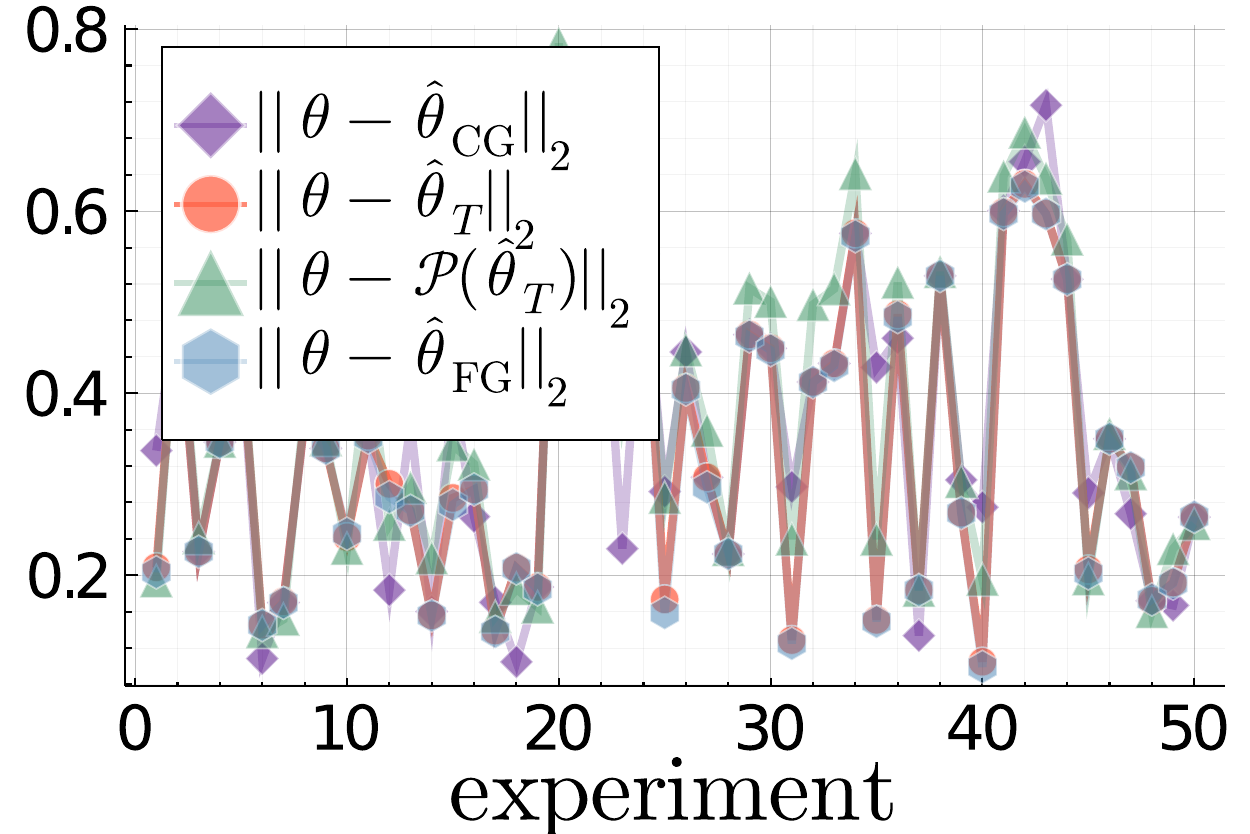}
        \caption{Approximation in operator norm for $n=3$.}
        \label{fig:learn:norm}
    \end{subfigure}\quad
    \begin{subfigure}[b]{0.22\textwidth}
        \includegraphics[width=\textwidth]{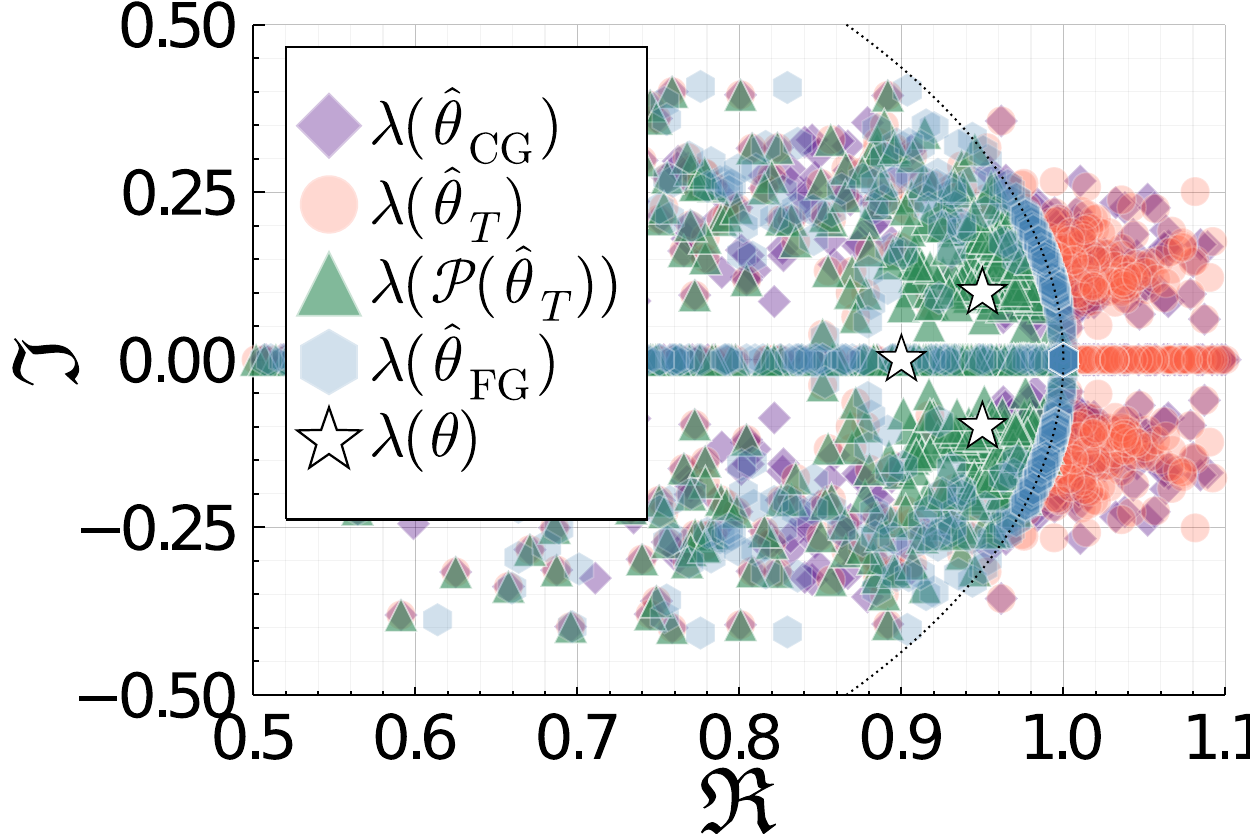}
        \caption{Eigenvalue spectra for $n=3$.}
        \label{fig:learn:eig}
    \end{subfigure}\quad 
    \begin{subfigure}[b]{0.22\textwidth}
        \includegraphics[width=\textwidth]{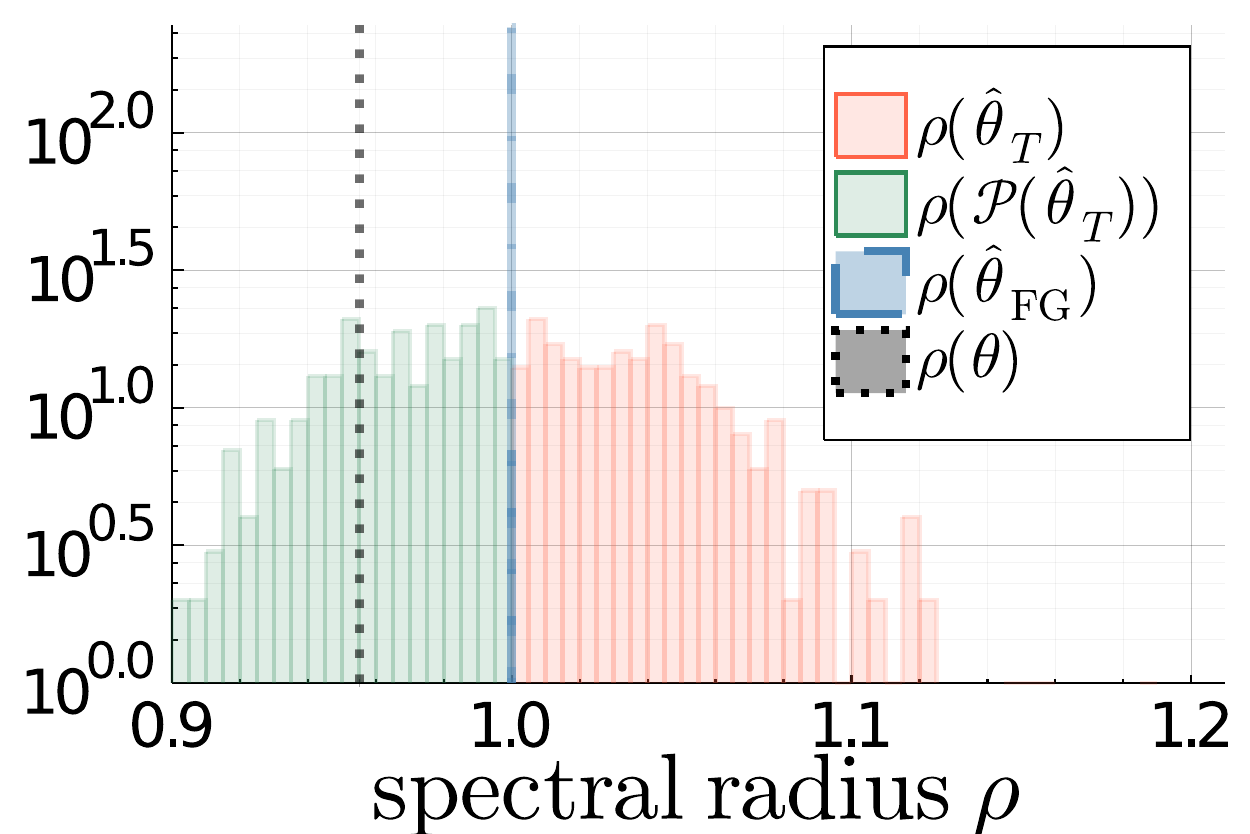}
        \caption{Historgrams of the spectral radii for $n=3$.}
        \label{fig:m1}
    \end{subfigure}\quad
    \begin{subfigure}[b]{0.22\textwidth}
        \includegraphics[width=\textwidth]{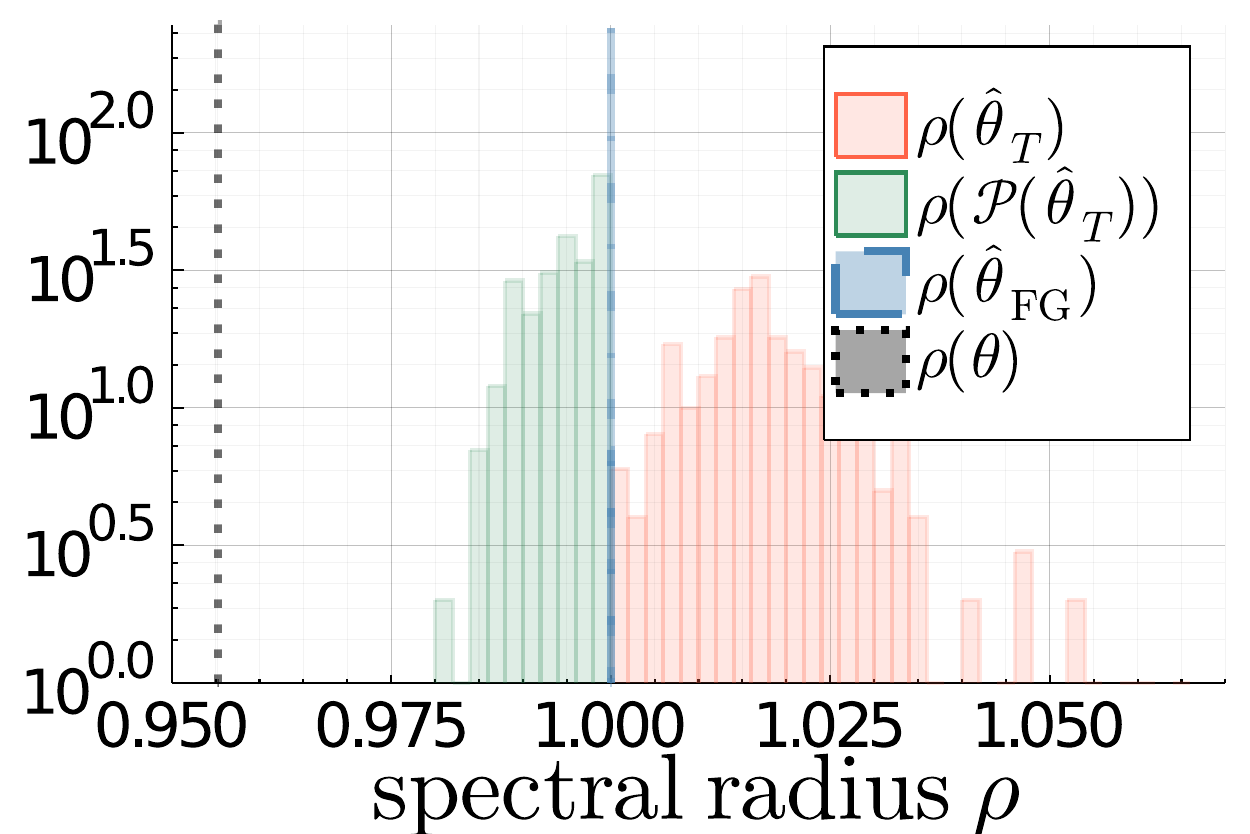}
        \caption{Historgrams of the spectral radii for $n=27$.}
        \label{fig:m9}
    \end{subfigure}
    \caption[]{Comparison of the least squares estimator~$\widehat\theta_T$ and its reverse $I$-projection $\mathcal P(\widehat\theta_T)$ against the CG and FG estimators~$\widehat \theta_{\rm CG}$ and~$\widehat \theta_{\rm FG}$, respectively, based on 250 independent simulation runs with~$\widehat\theta_T\notin \Theta$.
    To aid visibility, Figure~\ref{fig:learn:norm} only shows the first 50 experiments.}
    \label{fig:learn:stab}
\end{figure*}

\begin{figure*}[h!]
    \centering
    \begin{subfigure}[b]{0.22\textwidth}
        \includegraphics[width=\textwidth]{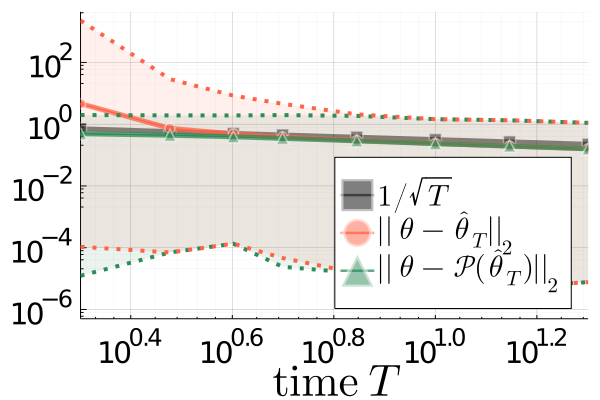}
        \caption{Convergence in operator norm for $n=1$.}
        \label{fig:learn:norm:n1}
    \end{subfigure}\quad
    \begin{subfigure}[b]{0.22\textwidth}
        \includegraphics[width=\textwidth]{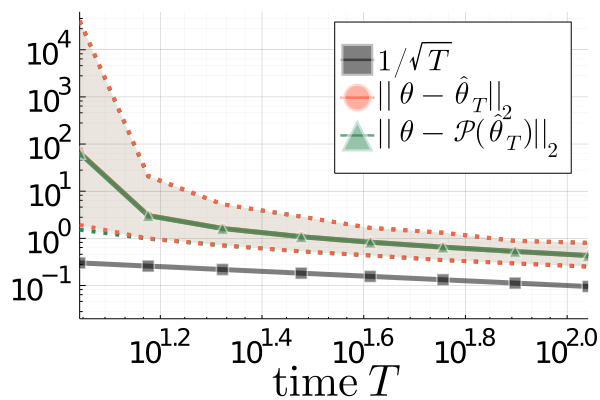}
        \caption{Convergence in operator norm for $n=10$.}
        \label{fig:learn:norm:n10}
    \end{subfigure}\quad 
    \begin{subfigure}[b]{0.22\textwidth}
        \includegraphics[width=\textwidth]{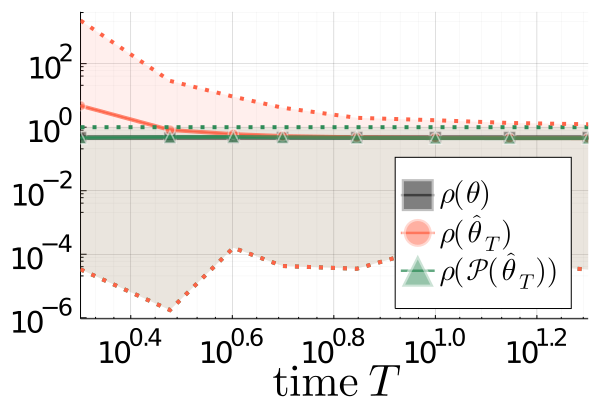}
        \caption{Convergence of spectral radii for $n=1$.}
        \label{fig:learn:rad:n1}
    \end{subfigure}\quad
    \begin{subfigure}[b]{0.22\textwidth}
        \includegraphics[width=\textwidth]{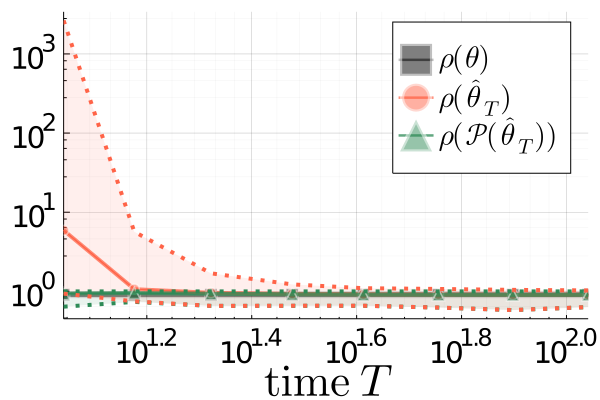}
        \caption{Convergence of spectral radii for $n=10$.}
        \label{fig:learn:rad:n10}
    \end{subfigure}
    \caption[]{Convergence behavior of~$\widehat\theta_T$ and~$\mathcal P(\widehat\theta_T)$. Solid lines represent averages and shaded areas represent ranges across $10^4$ simulations (corresponding to 100 randomly generated system matrices~$\theta$ and 100 randomly generated state trajectories per system matrix).}
    \label{fig:learn:stab:random}
\end{figure*}

Now we have all the tools in place to prove Theorem~\ref{thm:eff:stable:identification:alternative}. 
\begin{proof}[Proof of Theorem~\ref{thm:eff:stable:identification:alternative}] 
The three assertions follow directly from Propositions~\ref{prop:consistency}, \ref{prop:finite:sample} and~\ref{prop:numerical:comp}, respectively.
\end{proof}

\section{Numerical simulations}
\label{sec:num}
The two subsequent examples showcase the statistical and computational properties of the reverse $I$-projection.

\begin{example}[Spectral approximation quality]
\label{ex:spectral:approx}
\upshape{
Select $m\in \{1,9,64\}$, and set $Q=I_{3m}$, $w_t\sim\mathcal N(0,I_{3m})$ and
\begin{equation*}
    \theta = (Y\otimes I_m),\quad \text{where} \quad Y = \begin{bmatrix} 0.95 & 0.1 & 1\\
    -0.1 & 0.95 & 0\\
    0 & 0 & 0.9 \end{bmatrix}. 
\end{equation*}
The eigenvalues of $Y$ are $0.9$ and $0.95\pm i\cdot 0.1$, where $i$ is the imaginary unit, and thus $\theta$ is almost unstable. Set now $T=25 \sqrt{m}$, and generate 250 independent state trajectories for which $\widehat{\theta}_T\notin \Theta$. { This is achieved by sampling an indefinite number of state trajectories and disregarding all those for which $\widehat{\theta}_T\in \Theta$. Sampling continues until 250 state trajectories with $\widehat{\theta}_T\notin \Theta$ have been found}. Next, compute $\mathcal P(\widehat\theta_T)$ approximately as described in Proposition~\ref{prop:numerical:comp} for $\delta=10^{-9}$. In addition, compute~$\Pi_\Theta(\widehat \theta_T)$ with respect to the Frobenius norm by using the approximate constraint generation (CG) method of~\textcite{ref:Boots:Nips} and the exact fast gradient (FG) method of~\textcite{ref:Gillis-19}.\footnote{\url{sites.google.com/site/nicolasgillis/code}} Figure~\ref{fig:learn:norm} shows that for $m=1$ all methods succeed in approximating~$\theta$ reasonably closely, with the FG method having a slight edge. However, from Figure~\ref{fig:learn:eig} it becomes apparent that $\mathcal P(\widehat\theta_T)$ approximates the eigenvalue spectrum of $\theta$ best. All of its eigenvalues reside within the complex unit circle and concentrate near the true spectrum of~$\theta$, which might be explained by the structure-preserving property of the reverse $I$-projection established in Corollary~\ref{cor:struc}. In contrast, the CG method often produces unstable estimators, and the FG method generates estimators that reside on the boundary of~$\Theta$. These observations are consistent with Figure~\ref{fig:m1}, which displays the empirical distribution of the spectral radii corresponding to the different estimators. Indeed, the histogram corresponding to the reverse $I$-projection is confined to $[0,1]$ and centred around~$\rho(\theta)$. The FG method, on the other hand, is designed to generate estimators with unit spectral radius, which could, however, be undesirable in applications. {Our numerical experiments suggest that the event $\widehat{\theta}_T\notin \Theta$ becomes less likely in higher dimensions and that, if this event occurs,} then $\widehat{\theta}_T$ {concentrates near} the boundary of~$\Theta$, see Figure~\ref{fig:m9}. Thus, $\mathcal{P}(\widehat{\theta}_T)$ is more likely to have a spectral radius close to~$1$. This phenomenon is further accentuated for $m=64$, see Figure~\ref{fig:m64} in the appendix. 

As both the reverse $I$-projection and the FG method have complexity~$O(n^3)$, we compare their runtimes for $\theta' = (Y\otimes 2 I_m)\notin \Theta$ as a function of~$n=3m$, see Figure~\ref{fig:runtime} in the appendix. We observe that the reverse $I$-projection is faster for $n\lesssim 500$, while the FG method dominates for higher dimensions. We remark that the reverse $I$-projection is computed using off-the-shelf software but could be sped up by using dedicated large-scale algorithms \autocite{ref:Laub:parallel,ref:Fassbender}.
All simulations were implemented in Julia~\autocite{ref:Julia2017} and run on a 4GHz CPU with 16Gb RAM.
}
\end{example}

{
\begin{remark}[High-dimensional least squares estimators]
\label{rem:high:dim} 
It appears that the least squares estimator $\widehat{\theta}_T$ is less likely to be unstable in higher dimensions. In the context of Example~\ref{ex:spectral:approx}, the sample size~$T$ required for~$\widehat \theta_T$ to be stable with a given confidence grows indeed sublinearly with the dimension~$m$. Specifically, our experiments indicate that for $T=25 \sqrt{m}$, one needs approximately $1.75$, $1.1$ or $1$ experiments on average to generate a stable estimator for~$m=1$, $m=9$ and $m=64$, respectively. However, for $T=25 m$, one needs approximately $1.75$, $3.1$ or~$1,600$ experiments on average to generate a stable estimator for~$m=1$, $m=9$ and $m=64$. Note that these empirical frequencies may still depend on $Y$ and $S_w$.
\end{remark}
}

\begin{example}[Statistical guarantees]
\label{ex:stat:rates}
\upshape{
The second experiment is designed to validate the statistical guarantees of Proposition~\ref{prop:finite:sample}. To this end, choose $n\in\{1,10,100\}$, and sample 100 stable matrices from a standard normal distribution on~$\mathbb R^{n\times n}$ restricted to~$\Theta$. For each such matrix~$\theta$, generate 100 state trajectories of length $\overline T=10^2(n+1)$, and compute $\mathcal P(\widehat\theta_T)$ for every $T=1,\ldots,\overline T$ approximately as described in Proposition~\ref{prop:numerical:comp} for $\delta=10^{-9}$. Figures~\ref{fig:learn:norm:n1} and~\ref{fig:learn:norm:n10} visualize the convergence of the estimators $\widehat\theta_T$ and $\mathcal P(\widehat\theta_T)$ to~$\theta$ with respect to the operator norm for $n=1$ and $n=10$, respectively. Both figures are consistent with the $1/\sqrt{T}$ scaling law anticipated by Proposition~\ref{prop:finite:sample}. Although Example~\ref{ex:spectral:approx} revealed that $\rho(\mathcal{P}(\widehat{\theta}_T))$ can concentrate away from $\rho(\theta)$ in high dimensions, Figures~\ref{fig:learn:rad:n1} and~\ref{fig:learn:rad:n10} show that  $\rho(\mathcal{P}(\widehat{\theta}_T))$ converges to~$\rho(\theta)$ on average. { Figures~\ref{fig:learn:norm:n10} and~\ref{fig:learn:rad:n10} further show that the reverse $I$-projection does not need to introduce a large distortion with respect to the operator norm in order to stabilize $\widehat{\theta}_T$, \textit{i.e.}, we observe that $\|\theta-\widehat{\theta}_T\|_2\approx \|\theta-\mathcal{P}(\widehat{\theta}_T)\|_2$.} Figures~\ref{fig:learn:norm:n100} and~\ref{fig:learn:rad:n100} in the appendix extend these results to~$n=100$. 
}
\end{example}

\newpage
\section{Appendix}
\label{sec:appendix}
In this appendix we collect all proofs not contained in the main body of the paper, and we provide some auxiliary results. 

\subsection{Proofs of Section~\ref{sec:moderate:deviations:theory}}

\begin{proof}[Proof of Proposition~\ref{prop:MDP:closed:loop}]
    Fix any~$\theta\in\mathbb{R}^{n\times n}$ and assume that~$\|\theta\|_2<1$. This condition is stronger than Assumption~\ref{ass:linear:sys}\,(i) because the spectral radius $\rho(\theta)$ is bounded above by the spectral norm~$\|\theta\|_2$. Together with Assumptions~\ref{ass:linear:sys}\,(ii) and \ref{ass:model:MDP:regularity}, this condition implies via~\autocite[Proposition 2.2]{ref:Yu-09}, that the transformed least squares estimators~$\{\sqrt{T / a_T}(\widehat \theta_T - \theta)+\theta\}_{T\in\mathbb{N}}$
	satisfy a moderate deviation principle with rate function
	\begin{align} 
	\label{eq:rate:function:AR:MDP:proof}  \sup_{L\in\mathbb{R}^{n\times n}}   \left\{  \langle L, \theta^\prime- \theta\rangle - \frac{1}{2} \mathbb{E}_{\theta}\left[{\langle L, w_1 x_0^\mathsf{T} S_\theta^{-1}}\rangle^2\right]  \right\},
	\end{align}
	where the inner product of two matrices $A,B\in\mathbb{R}^{n\times n}$ is defined as $\langle A,B \rangle =\mathsf{tr}(A^{\mathsf{T}}B)$. This rate function, which is defined implicitly as the optimal value of an optimization problem, captures the speed at which the transformed least squares estimators (and indirectly also the standard least squares estimators) converge to~$\theta$.
	
	Next, we will demonstrate that~\eqref{eq:rate:function:AR:MDP:proof} is equivalent to~$I(\theta',  \theta)$ defined in~\eqref{eq:rate:function:AR:MDP}. As a preparation, we derive the analytical solution of the following unconstrained convex quadratic maximization problem over the matrix space $\mathbb{R}^{n\times n}$,
	\begin{equation} \label{LQG:proof:auxiliary:op}
    	\max_{X\in\mathbb{R}^{n\times n}}\left\{ \langle C, X \rangle - \frac{1}{2} \mathsf{tr}\left(X B_1 X^\mathsf{T} B_2\right) \right\},
	\end{equation}
	which is parameterized by $B_1, B_2\in\mathcal{S}^n_{\succ 0}$ and  $C\in\mathbb{R}^{n\times n}$. As the trace term $\mathsf{tr}(X B_1 X^\mathsf{T} B_2)$ is convex in $X$ by virtue of \autocite[Corollary~1.1]{ref:Lieb-73}, we can solve~\eqref{LQG:proof:auxiliary:op} by setting the gradient of the objective function to zero. Specifically, using \autocite[Propositions~10.7.2 \& 10.7.4]{ref:Ber-09}, we find
	\begin{align*}
	{\nabla_X} \left( \langle{C},{X}\rangle - \frac{1}{2}\mathsf{tr}(X B_1 X^\mathsf{T} B_2)  \right)= C^\mathsf{T} -  B_1 X^\mathsf{T} B_2.
	\end{align*}
	As $B_1,B_2\succ 0$, it is easy to verify that this gradient vanishes at $X^\star = B_2^{-1}CB_1^{-1}$, which implies that the optimal value of problem~\eqref{LQG:proof:auxiliary:op} amounts to $\frac{1}{2}\mathsf{tr}(B_2^{-1}CB_1^{-1}C^\mathsf{T})$. 
	
	Next, we rewrite the expectation in~\eqref{eq:rate:function:AR:MDP:proof} as
	\begin{align*}
	\mathbb{E}_{\theta}\left[{\langle{L},{w_1 x_0^\mathsf{T} S_\theta^{-1}}}\rangle^2\right] &
	= \mathbb{E}_{\theta}\left[{(w_1^\mathsf{T} L S_\theta^{-1} x_0 )^2}\right]\\
	&= \mathbb{E}_{\theta}\left[{w_1^\mathsf{T} L S_\theta^{-1} S_\theta S_\theta^{-1} L^\mathsf{T} w_1 }\right]\\
    &= \mathbb{E}_{\theta}{\mathsf{tr}({ L S_\theta^{-1} L^\mathsf{T} w_1 w_1^\mathsf{T} }})\\ 
	&= \mathsf{tr}({ L S_\theta^{-1} L^\mathsf{T} S_w }),
	\end{align*}
	where the second equality follows from Assumption~\ref{ass:linear:sys}\,(ii), which implies that~$x_0$ and~$w_1$ are independent, and from Assumption~\ref{ass:model:MDP:regularity}\,(ii), which implies that~$x_0$ is governed by the invariant state distribution~$\nu_\theta$ and thus has zero mean and covariance matrix~$S_\theta$. Substituting the resulting trace term into~\eqref{eq:rate:function:AR:MDP:proof} yields
	\begin{align*}
	\max_{L\in\mathbb{R}^{d\times d}} \left\{ \langle{\theta'- \theta},{L}\rangle - \frac{1}{2} \mathsf{tr}\left({ L S_\theta^{-1} L^\mathsf{T} S_w }\right)\right\} = \frac{1}{2} \mathsf{tr}\left({S_w^{-1}(\theta' - \theta) S_\theta (\theta' - \theta)^\mathsf{T}}\right)=I(\theta^\prime,  \theta),
	\end{align*}
	where the first equality follows from our analytical solution of problem~\eqref{LQG:proof:auxiliary:op} in the special case where $B_1=S_\theta^{-1}$, $B_2=S_2$ and $C=\theta'-\theta$. Thus, the rate function~\eqref{eq:rate:function:AR:MDP:proof} coincides indeed with the discrepancy function~$I(\theta^\prime,  \theta)$ defined in~\eqref{eq:rate:function:AR:MDP}.
	
	At last, we show that the moderate deviations principle established for $\|\theta\|_2<1$ remains valid for all asymptotically stable system matrices. To this end, fix any $\theta$ with $\rho(\theta)<1$. By standard Lyapunov stability theory, there exists $P\succ 0$ with $P-\theta^\mathsf{T} P\theta \succ 0$; see, {\em e.g.}, \autocite[Theorem~5.3.5]{ref:AREbook}. Using~$P$, we can apply the change of variables $\bar x_t=  P^{\frac{1}{2}}x_t$ and $\bar w_t=P^{\frac{1}{2}}w_t$ to obtain the auxiliary linear dynamical system
\[
    \bar x_{t+1} = \bar \theta\, \bar x_t + \bar w_t ,\quad \bar x_0\sim \bar \nu,
\]
with system matrix~$\bar \theta = P^{\frac{1}{2}}\theta P^{-\frac{1}{2}}$, where the noise $\bar w_t$ has mean zero and covariance matrix $S_{\bar w}=P^{\frac{1}{2}}S_w P^{\frac{1}{2}}$ for all $t\in\mathbb{N}$, and~$\bar \nu=\nu\circ P^{-\frac{1}{2}}$ is the pushforward distribution of~$\nu$ under the coordinate transformation~$P^{\frac{1}{2}}$. Note also that the invariante state covariance matrix is given by $S_{\bar \theta}= P^{\frac{1}{2}}S_\theta P^{\frac{1}{2}}$. By construction, the auxiliary linear system is equivalent to~\eqref{eq:LTI:system} and satisfies Assumptions~\ref{ass:linear:sys}\,(ii) and \ref{ass:model:MDP:regularity}. Moreover, multiplying $P-\theta^\mathsf{T} P\theta \succ 0$ from both sides with~$P^{-\frac{1}{2}}$ yields~$I_n - \bar \theta^\mathsf{T} \bar\theta\succ 0$, which means that the largest eigenvalue of $\bar \theta^\mathsf{T} \bar\theta$ is strictly smaller than~1 or, equivalently, that~$\|\bar \theta\|_2 < 1$. If we denote by $\widehat{\bar \theta}_T$ the least squares estimator for $\bar\theta$ based on~$T$ state observations of the auxiliary linear system, we may then conclude from the first part of the proof that the estimators $\{\sqrt{T / a_T}(\widehat{\bar \theta}_T - \theta) + \theta\}_{T\in\mathbb{N}}$ satisfy a moderate deviations principle with rate function
\[
    \bar I(\bar \theta',\bar \theta) = \mathsf{tr}\left(S_{\bar w}^{-1}(\bar \theta'-\bar \theta) S_{\bar \theta}(\bar \theta'-\bar \theta)^\mathsf{T} \right).
\] 
One also readily verifies from~\eqref{eq:LS:MDP} that the least squares estimators pertaining to the original and the auxiliary linear systems are related through the continuous transformation~$\widehat{\bar \theta}_T=P^{\frac{1}{2}}\widehat{\theta}_T P^{-\frac{1}{2}}$. The corresponding {\em transformed} estimators evidently obey the same relation. By the contraction principle~\autocite[Theorem~4.2.1]{dembo2009large}, the estimators $\{\sqrt{T / a_T}(\widehat {\theta}_T - \theta) + \theta\}_{T\in\mathbb{N}}$ thus satisfy a moderate deviations principle with rate function
\begin{align*}
    \bar I(P^{-\frac{1}{2}}\theta'P^{\frac{1}{2}},P^{-\frac{1}{2}} \theta P^{\frac{1}{2}}) = I(\theta',\theta).
\end{align*}
This observation completes the proof. 
\end{proof}

To prove that $I(\theta',\theta)$ is analytic, we recall that the stationary state covariance matrix $S_\theta$ is analytic in~$\theta\in\Theta$.

\begin{lemma}[Analyticity of $S_{\theta}$ {\autocite{polderman2}}, Lemma~3.2]
	\label{lem:dlyap_cont}
	If $S_w\succ 0$, then the solution $S_{\theta}$ to the Lyapunov equation \eqref{eq:Lyapunov} is analytic in $\theta\in\Theta$.
\end{lemma}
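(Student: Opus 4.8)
The plan is to turn the implicit Lyapunov equation~\eqref{eq:Lyapunov} into an explicit formula for $S_\theta$ by vectorizing it, and then to read off analyticity from that formula. Applying the column-stacking operator $\mathsf{vec}$ to both sides of $S_\theta=\theta S_\theta\theta^\mathsf{T}+S_w$ and using the identity $\mathsf{vec}(\theta S_\theta\theta^\mathsf{T})=(\theta\otimes\theta)\,\mathsf{vec}(S_\theta)$, where $\otimes$ denotes the Kronecker product, I would rewrite the equation as the linear system
\begin{equation*}
    (I_{n^2}-\theta\otimes\theta)\,\mathsf{vec}(S_\theta)=\mathsf{vec}(S_w).
\end{equation*}
Provided the coefficient matrix is invertible, this yields the closed form $\mathsf{vec}(S_\theta)=(I_{n^2}-\theta\otimes\theta)^{-1}\mathsf{vec}(S_w)$, from which everything else will follow.

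The crux is to verify that $I_{n^2}-\theta\otimes\theta$ is nonsingular for every $\theta\in\Theta$. Here I would invoke the standard fact that the eigenvalues of $\theta\otimes\theta$ are exactly the pairwise products $\lambda_i(\theta)\lambda_j(\theta)$ of the eigenvalues of $\theta$. Since $\rho(\theta)<1$, each such product satisfies $|\lambda_i(\theta)\lambda_j(\theta)|\le\rho(\theta)^2<1$, so $1$ is not an eigenvalue of $\theta\otimes\theta$ and $I_{n^2}-\theta\otimes\theta$ is invertible on all of $\Theta$. Equivalently, this is the well-known statement that the Lyapunov operator $\Delta\mapsto\Delta-\theta\Delta\theta^\mathsf{T}$ is a linear bijection on $\mathcal{S}^n$ precisely when $\rho(\theta)<1$, which also re-confirms that $S_\theta$ is the unique solution of~\eqref{eq:Lyapunov}.

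Analyticity then follows immediately. The entries of $\theta\otimes\theta$ are polynomials in the entries of $\theta$, hence real-analytic on $\Theta$, and by Cramer's rule the entries of $(I_{n^2}-\theta\otimes\theta)^{-1}$ are ratios of polynomials in these entries whose common denominator $\det(I_{n^2}-\theta\otimes\theta)$ is nonvanishing throughout $\Theta$. Products and ratios of analytic functions with nonvanishing denominator are analytic, so every entry of $\mathsf{vec}(S_\theta)$, and therefore of $S_\theta$, is a real-analytic function of $\theta$ on $\Theta$. I do not expect any genuine obstacle here; the only point requiring care is the spectral computation for the Kronecker product that certifies invertibility on all of $\Theta$. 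As an alternative that avoids vectorization, one could instead apply the analytic implicit function theorem to the polynomial map $F(\theta,S)=S-\theta S\theta^\mathsf{T}-S_w$ on $\Theta\times\mathcal{S}^n$, whose partial derivative with respect to $S$ is the Lyapunov operator $\Delta\mapsto\Delta-\theta\Delta\theta^\mathsf{T}$, invertible exactly on $\Theta$ by the same eigenvalue argument.
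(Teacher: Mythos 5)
Your argument is correct and complete: the vectorization identity $\mathsf{vec}(\theta S_\theta\theta^\mathsf{T})=(\theta\otimes\theta)\mathsf{vec}(S_\theta)$, the spectral fact that the eigenvalues of $\theta\otimes\theta$ are the products $\lambda_i(\theta)\lambda_j(\theta)$ (so $I_{n^2}-\theta\otimes\theta$ is invertible whenever $\rho(\theta)<1$), and the Cramer's-rule observation that the entries of $S_\theta$ are then rational functions of the entries of $\theta$ with nonvanishing denominator on $\Theta$ together give real-analyticity. Note that the paper does not prove this lemma at all --- it imports it verbatim as Lemma~3.2 of the cited reference --- so your write-up supplies a self-contained proof where the paper offers only a citation; the route you take (equivalently, the analytic implicit function theorem applied to $F(\theta,S)=S-\theta S\theta^\mathsf{T}-S_w$, whose partial derivative in $S$ is the invertible Lyapunov operator) is the standard one for this fact and matches what the cited lemma establishes.
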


\begin{proof}[Proof of Proposition~\ref{prop:I_inf}]
    As for assertion~{\em (i)}, note that the quadratic function~$(\theta'-\theta)^\mathsf{T}(\theta'-\theta)S_w^{-1}$ is manifestly analytic in~$(\theta',\theta)$. Moreover, the stationary state covariance matrix $S_{\theta}$ is analytic in $\theta$ (and thus also in $(\theta',\theta)$) by virtue of Lemma~\ref{lem:dlyap_cont}. The rate function~$I(\theta',\theta)$ defined in~\eqref{eq:rate:function:AR:MDP} can therefore be viewed as an inner product of two matrix-valued analytic functions and is thus analytic thanks to~\autocite[Proposition~2.2.2]{ref:Real_Analytic_02}. 
    
    The proof of assertion~{\em (ii)} consists of two steps. We first prove that if a sequence $\{\theta_k\}_{k\in \mathbb{N}}$ in~$\Theta$ has an unstable limit~$\theta$ (\textit{i.e.}, $\rho(\theta)=1$), then there exists a subsequence $\{\theta_{k_l}\}_{l\in \mathbb{N}}$ with $\lim_{l\to \infty}I(\theta',\theta_{k_l})=\infty$ for all~$\theta'\in\Theta$ (Step~1). We then use this result to show that the set~$\{\theta\in\Theta:I(\theta', \theta)\leq r\}$ is compact for all~$r\ge 0$ (Step~2).

    {\em Step~1}: We first derive an easily computable lower bound on the rate function~$I(\theta',\theta)$ for any asymptotically stable matrices $\theta',\theta\in\Theta$. To this end, we denote by $\lambda\in\mathbb C$ an eigenvalue of~$\theta$ whose modulus~$|\lambda|$ matches the spectral radius~$\rho(\theta)<1$. We further denote by~$v\in\mathbb C^n$ a normalized eigenvector corresponding to the eigenvalue~$\lambda$, that is, $\|v\|=1$ and~$\theta v = \lambda v$. 
	We also use $\beta=\lambda_{\min}(S_w)/\lambda_{\max}(S_w)>0$ as a shorthand for the inverse condition number of the noise covariance matrix $S_w\succ 0$. Recalling that for any $A,B,C\in\mathcal{S}^n_{\succeq 0}$ the semidefinite inequality $A\succeq B$ implies~$\mathsf{tr}({AC})\geq \mathsf{tr}({BC})$, we find the following estimate.
	\begin{align*}
	2 I(\theta',\theta) &=  \mathsf{tr}\left({S_w^{-1}(\theta^\prime - \theta) S_\theta (\theta^\prime - \theta)^\mathsf{T}}\right) \\&\geq  \lambda^{-1}_{\max}(S_w)\,\mathsf{tr}\left({(\theta^\prime - \theta) S_\theta (\theta^\prime - \theta)^\mathsf{T}}\right) \\
	& \geq  \beta \sum_{k=0}^\infty \mathsf{tr}\left({(\theta^\prime - \theta) \theta^k (\theta^k)^\mathsf{T} (\theta^\prime - \theta)^\mathsf{T}}\right)\\ &  =  \beta \sum_{k=0}^\infty \mathsf{tr}\left({(\theta^k)^\mathsf{T} (\theta^\prime - \theta)^\mathsf{T}(\theta^\prime - \theta) \theta^k}\right)\\
	& \geq  \beta \sum_{k=0}^\infty v^{\mathsf{H}}(\theta^k)^\mathsf{T} (\theta^\prime - \theta)^\mathsf{T}(\theta^\prime - \theta) \theta^k v\\ &
	= \beta \sum_{k=0}^\infty |\lambda|^{2k}v^{\mathsf{H}} (\theta^\prime - \theta)^\mathsf{T}(\theta^\prime - \theta) v\\ 
	&=\beta \|  (\theta^\prime - \theta)v\|^2 \frac{1}{1-|\lambda|^2}
	\end{align*}
	Here, the first equality follows from the definition of the rate function in~\eqref{eq:rate:function:AR:MDP}, and the first inequality exploits the bound~$\lambda_{\max}(S_w)I_{n}\succeq S_w$. The second inequality holds due to the series representation $S_\theta = \sum_{t=0}^\infty \theta^t S_w (\theta^t)^\mathsf{T}$, the bound $S_w\succeq \lambda_{\min}(S_w)I_{n}$ and the definition of~$\beta$. The second equality exploits the cyclicity property of the trace, and the third inequality holds because any (real) matrix $C\in\mathcal{S}^n_{\succeq 0}$ satisfies
	\begin{equation*}
	\mathsf{tr}({C}) \ge  w^{\mathsf{H}} C w \quad \forall w\in\mathbb C^n:\;\|w\|=1.
	\end{equation*}
	The third equality then uses the eigenvalue equation~$\theta v = \lambda v$, and the last equality holds because~$|\lambda|=\rho(\theta)<1$. We thus conclude that the rate function admits the lower bound
	\begin{equation} \label{eq:proof:coercivity:step1}
	I(\theta',\theta)
	\geq \frac{\beta}{2}\|  (\theta^\prime - \theta)v\|^2 \frac{1}{1-|\lambda|^2}.
	\end{equation}
	Consider now a converging sequence~$\{\theta_k\}_{k\in \mathbb{N}}$ in $\Theta$ whose limit $\theta$ satisfies $\rho(\theta)=1$. Define $\lambda_k\in\mathbb C$ as an eigenvalue of $\theta_k$ with $|\lambda_k|=\rho(\theta_k)<1$ and let $v_k\in\mathbb C^n$ be a normalized eigenvector corresponding to $\lambda_k$, that is, $\|v_k\|=1$ and~$\theta_k v_k = \lambda_k v_k$. As the spectral radius is a continuous function, we then have
	\[
	\lim_{k\to \infty}|\lambda_k| = \lim_{k\to \infty} \rho(\theta_k)=\rho(\lim_{k\to \infty}\theta_k)=\rho(\theta)=1.
	\]
	In addition, as the unit spheres in~$\mathbb C$ and in~$\mathbb C^n$ are both compact, there exists a subsequence $\{(\lambda_{k_l},v_{k_l})\}_{l\in\mathbb{N}}$ converging to a point~$(\lambda,v)\in \mathbb C\times \mathbb C^n$ with $|\lambda|=1$ and $\|v\|=1$. This limit satisfies the eigenvalue equation 
	\begin{equation} \label{eq:helping:coercivity:proof}
	\theta v = \lim_{l\to\infty} \theta_{k_l}v_{k_l}=\lim_{l\to\infty} \lambda_{k_l}v_{k_l} = \lambda v,
	\end{equation}
	which implies that $v$ is an eigenvector of $\theta$ corresponding to the eigenvalue $\lambda$ with $|\lambda|=1=\rho(\theta)$.
	
	The above reasoning allows us to conclude that 
	\begin{align*}
        \lim_{l\to\infty}I(\theta',\theta_{k_l}) &\geq \lim_{l\to\infty} \frac{\beta}{2}\|  (\theta_{k_l}-\theta')v_{k_l}\|^2 \frac{1}{1-|\lambda_{k_l}|^2}\\
        & =\lim_{l\to\infty}\frac{\beta}{2}\|\lambda_{k_l}v_{k_l} - \theta' v_{k_l}\|^2 \frac{1}{1-|\lambda_{k_l}|^2} \\ &=\frac{\beta}{2}\|\lambda v - \theta' v\|^2 \lim_{l\to\infty}\frac{1}{1-|\lambda_{k_l}|^2}=\infty,
	\end{align*}
	where the inequality follows from \eqref{eq:proof:coercivity:step1}, the first equality holds because $\theta_k v_k = \lambda_k v_k$, and the second equality exploits~\eqref{eq:helping:coercivity:proof}. Finally, the last equality holds because $\lim_{k\to\infty}|\lambda_k|=1$ and because the term $\frac{\beta}{2}\|\lambda v - \theta' v\|^2$ is strictly positive. Indeed, this non-negative term can only vanish  if~$\theta'v=\lambda v$, which would imply that $\theta'$ is unstable (as $|\lambda|=1)$ and thus contradict the assumption that~$\theta'\in\Theta$. This observation completes Step~1.
	
	{\em Step~2}: Select now any~$\theta'\in\Theta$	and~$r\ge 0$, and define $\mathcal{A}=\{\theta\in\Theta : I(\theta',\theta)\leq r\}$. In order to prove that $\mathcal{A}$ is compact, we need to show that it is bounded and closed. This is potentially difficult because~$\Theta$ itself is unbounded and open. In order to prove boundedness of~$\mathcal{A}$, note that every $\theta\in\mathcal{A}$ satisfies
	\begin{align*}
	r\geq I(\theta',\theta) &= \frac{1}{2} \mathsf{tr}\left({S_w^{-1}(\theta^\prime - \theta) S_\theta (\theta^\prime - \theta)^\mathsf{T}}\right)\\
	&\geq \frac{1}{2} \mathsf{tr}\left({S_w^{-1}(\theta^\prime - \theta) S_w (\theta^\prime - \theta)^\mathsf{T}}\right),
	\end{align*}
	where the second inequality follows from the trivial bound~$S_{\theta} \succeq S_w$, which is implied by the Lyapunov equation \eqref{eq:Lyapunov}. 
	Thus, the sublevel set~$\mathcal{A}$ is contained in a bounded ellipsoid,
	\begin{equation*}
	\mathcal{A}\subset \left\{\theta\in\mathbb{R}^{n\times n}: \frac{1}{2} \mathsf{tr}\left({S_w^{-1}(\theta^\prime - \theta) S_w (\theta^\prime - \theta)^\mathsf{T}}\right)\leq r\right\},
	\end{equation*}
	and thus~$\mathcal{A}$ is bounded. To show that~$\mathcal{A}$ is closed, consider a converging sequence $\{\theta_k\}_{k\in\mathbb{N}}$ in~$\mathcal{A}$ with limit~$\theta$. We first prove that~$\theta\in\Theta$. Suppose for the sake of argument that $\theta\notin\Theta$. As $\theta$ is the limit of a sequence in~$\mathcal{A}\subset\Theta$, this implies that~$\theta$ must reside on the boundary of~$\Theta$ (\textit{ i.e.}, $\rho(\theta)=1$). By the results of Step~1, we may thus conclude that there exists a subsequence~$\{\theta_{k_l}\}_{l\in\mathbb{N}}$ with~$\lim_{l\to\infty}I(\theta',\theta_{k_l}) = \infty$. Clearly, we then have~$I(\theta',\theta_{k_l})>r$ for all sufficiently large $l$, which contradicts the assumption that~$\theta_{k_l}\in \mathcal{A}$ for all~$l\in\mathbb{N}$. Thus, our initial hypothesis was wrong, and we may conclude that~$\theta\in\Theta$. In addition, we have
	\begin{align*}
	r\geq \lim_{k\to\infty}I(\theta',\theta_k) =  I(\theta',\lim_{k\to\infty}\theta_k) = I(\theta',\theta),
	\end{align*}
	where the inequality holds because~$\theta_k\in\mathcal{A}$ for all~$k\in\mathbb{N}$. Here, the first equality follows from assertion~{\em (i)}, which ensures that the rate function is analytic and thus continuous. Hence, we find that~$\theta\in\mathcal{A}$. As the sequence~$\{\theta_k\}_{k\in\mathbb{N}}$ was chosen arbitrarily, we conclude that~$\mathcal{A}$ is closed. In summary, we have shown that~$\mathcal{A}$ is bounded and closed and thus compact. This observation completes Step~2. Hence, assertion~{\em (ii)} follows.
	
	As for assertion~{\em (iii)}, fix~$\theta'\in \Theta$ and consider a sequence~$\{\theta_k\}_{k\in\mathbb{N}}$ in~$\Theta$ whose limit~$\theta$ resides on the boundary of the open set~$\Theta$. This implies that~$\theta\notin\Theta$. Next, choose any~$r\ge 0$. We know from assertion~{\em (ii)} that~$\mathcal{A}=\{\theta\in\Theta : I(\theta',\theta)\leq r\}$ is a compact subset of~$\Theta$, and thus~$\theta\notin\mathcal{A}$. Hence, the complement of~$\mathcal{A}$ represents an open neighborhood of~$\theta$, and thus there exists~$k(r)\in\mathbb{N}$ such that~$\theta_k\notin\mathcal{A}$ and~$I(\theta', \theta_k)\ge r$ for all~$k\ge k(r)$. As~$r$ was chosen freely, this means that~$\lim_{k\rightarrow \infty} I(\theta', \theta_k)=\infty$.
\end{proof}

\begin{proof}[Proof of Lemma~\ref{lem:rate:to:norm}]
By the definition of the rate function we have
\begin{align*}
    & 2I(\theta',\theta) = \mathsf{tr}\left(S_w^{-1}(\theta'-\theta)S_{\theta}(\theta'-\theta)^\mathsf{T} \right)\geq  \sigma_{\mathrm{min}}(S_w^{-1})\sigma_{\mathrm{min}}(S_{\theta})\|\theta'-\theta\|_F^2 \geq  \frac{1}{\kappa(S_w)} \|\theta'-\theta\|_2^2,
\end{align*}
where the third inequality holds because~$S_{\theta}\succeq S_w$ and $\sigma_{\mathrm{min}}(S_w^{-1})=1/\sigma_{\mathrm{max}}(S_w)$. The claim then follows by multiplying the above inequality with~$2\kappa(S_w)$ and taking square roots on both sides.
\end{proof}

\subsection{Proofs of Section~\ref{ssec:stability:guarantees}}
\begin{proof}[Proof of Proposition~\ref{prop:consistency}]
Recall that $\lim_{T\to\infty}\widehat\theta_T=\theta$ $\mathbb{P}_\theta$-almost surely \autocite{ref:Campi_98}. Therefore, we have $\mathbb{P}_\theta$-almost surely that
\begin{align*}
    \lim_{T\to\infty}\mathcal{P}(\widehat{\theta}_T)
    &=\lim_{T\to\infty}\arg\min_{\bar\theta\in\Theta}I(\widehat\theta_T,\bar\theta) \\
    &=\arg\min_{\bar\theta\in\Theta}\lim_{T\to\infty}I(\widehat\theta_T,\bar\theta)\\
    &=\arg\min_{\bar\theta\in\Theta}I\left(\lim_{T\to\infty}\widehat\theta_T,\bar\theta\right)\\
    &=\arg\min_{\bar\theta\in\Theta}I(\theta,\bar\theta) =\theta,
\end{align*}
where the first equality exploits the definition of $\mathcal{P}(\widehat{\theta}_T)$ in~\eqref{equ:rev:I:proj}. The second equality follows from the strict convexity of the rate function in its first argument and \autocite[Theorem~9.17]{sundaram_1996}, which imply that the reverse $I$-projection is continuous. The third equality follows from the continuity of the rate function established in Proposition~\ref{prop:I_inf}~\eqref{prop:I_inf:cont}, and the last equality holds because the rate function vanishes if and only if its arguments coincide. This proves the proposition. 
\end{proof}

{
\begin{proof}[Proof of Lemma~\ref{lem:learn:stab}]
Lemma~\ref{lem:rate:to:norm} and the monotonicity of the square root function imply that $\|\widehat{\theta}_T-\mathcal{P}(\widehat{\theta}_T)\|_2 \leq \widehat{\varepsilon}_T$ is a $\mathbb P_\theta$-almost sure event; see also the discussion following Lemma~\ref{lem:rate:to:norm}. As for assertion~{\em (i)},
we thus have
\begin{align*}
    \mathbb{P}_{\theta}&\left(\|\theta-\mathcal{P}(\widehat{\theta}_T)\|_2\leq \varepsilon+ \widehat\varepsilon_T \right)\\
    &\geq \mathbb{P}_{\theta}\left(\|\theta-\widehat{\theta}_T\|_2 + \|\widehat{\theta}_T-\mathcal{P}(\widehat{\theta}_T)\|_2\leq \varepsilon+ \widehat\varepsilon_T \right)\\
    &\geq \mathbb{P}_{\theta}\left(\|\theta-\widehat{\theta}_T\|_2\leq \varepsilon ,~ \|\widehat{\theta}_T-\mathcal{P}(\widehat{\theta}_T)\|_2 \leq \widehat \varepsilon_T\right)\\
    &= \mathbb{P}_{\theta}\left( \|\theta-\widehat{\theta}_T\|_2\leq \varepsilon \right).
\end{align*}
To estimate the probability that the least squares estimator~$\widehat \theta_T$ differs from~$\theta$ at most by~$\varepsilon$ in the operator norm, we may leverage tools developed in~\autocite[\S~6]{ref:sarkar19a}. To this end, assume first that the noise is isotropic, \textit{i.e.}, assume that $S_w=\alpha I_n$ for some $\alpha>0$. In this case, \autocite[Theorem~1]{ref:sarkar19a} implies that $\mathbb{P}_{\theta}( \|\theta-\widehat{\theta}_T\|_2\leq \varepsilon )\geq 1-\beta$ for all~$\beta,\varepsilon\in (0,1)$ and sample sizes $T\geq \widetilde{O}(n) \log(1/\beta)/\varepsilon^2$. As~$\kappa(S_w)=\kappa(\alpha I_n)=1$, this settles assertion~{\em (i)} when the noise is isotropic.

Assume now that the noise is anisotropic with an arbitrary convariance matrix $S_w\succ 0$. The change of coordinates $\bar x_t=  S_w^{-\frac{1}{2}}x_t$ and $\bar w_t=S_w^{-\frac{1}{2}}w_t$ then yields the auxiliary system
\[
    \bar x_{t+1} = \bar \theta\, \bar x_t + \bar w_t ,\quad \bar x_0\sim \nu\circ S_w^{\frac{1}{2}},
\]
with~$\bar \theta = S_w^{-\frac{1}{2}}\theta S_w^{\frac{1}{2}}$ and isotropic noise $\bar w_t$ having zero mean and unit covariance matrix for all $t\in\mathbb{N}$. Denoting by $\widehat{\bar\theta}_T$ the least squares estimator for the auxiliary system, we find
\begin{align*}
    \mathbb{P}_{\theta}\left( \|\theta-\widehat{\theta}_T\|_2\leq \varepsilon \right)& = \mathbb{P}_{\theta}\left( \|S_w^{\frac{1}{2}}(\bar\theta-\widehat{\bar\theta}_T) S_w^{-\frac{1}{2}} \|_2\leq \varepsilon \right)\\
    & \geq \mathbb{P}_{\theta}\left( \|S_w^{\frac{1}{2}}\|_2\|\bar\theta-\widehat{\bar\theta}_T\|_2\| S_w^{-\frac{1}{2}} \|_2\leq \varepsilon \right)\\
    & = \mathbb{P}_{\theta}\left( \|\bar\theta-\widehat{\bar\theta}_T\|_2\leq \varepsilon \kappa(S_w)^{-\frac{1}{2}}\right),
\end{align*}
where the last equality holds because 
\[
    \|S_w^{{1}/{2}}\|_2\|S_w^{-{1}/{2}}\|_2=\kappa(S_w^{{1}/{2}})=\kappa(S_w)^{{1}/{2}}.
\]
From the first part of the proof for linear systems driven by isotropic noise we know that the resulting probability is no less than~$1-\beta$ whenever $T\geq \kappa(S_w) \widetilde{O}(n) \log(1/\beta)/\varepsilon^2$. This observation completes the proof of assertion~{\em (i)}.

The proof of assertion~{\em (ii)} first parallels that of assertion~{\em (i)}. In particular, multiplying $\varepsilon$ with $\sqrt{a_T/T}$ yields
\begin{align*}
    &\mathbb{P}_{\theta}\left(\|\theta-\mathcal{P}(\widehat{\theta}_T)\|_2\leq \varepsilon\sqrt{a_T/T}+ \widehat\varepsilon_T \right) = \mathbb{P}_{\theta}\left( \|\theta-\widehat{\theta}_T\|_2\leq \varepsilon \sqrt{a_T/T} \right).
\end{align*}
However, now we use the moderate deviations principle from Section~\ref{sec:moderate:deviations:theory} to bound the resulting probability. To this end, define $\mathcal{D}=\{\theta'\in \mathbb{R}^{n\times n}:\|\theta'-\theta\|_2> \varepsilon\}$. By Lemma~\ref{lem:rate:to:norm}, we have $I(\theta',\theta)> \varepsilon^2/(2\kappa(S_w))$ for any estimator realization $\theta'\in\mathcal D$, and thus $\inf_{\theta'\in \mathsf{cl} \, \mathcal{D}}I(\theta',\theta)\geq \varepsilon^2/(2\kappa(S_w))$. Recall now from Proposition~\ref{prop:MDP:closed:loop} that the transformed least squares estimators $\widehat{\vartheta}_{T}=\sqrt{T/a_T}(\widehat{\theta}_{T}-\theta)+\theta$ obey a moderate deviations principle with rate function~$I$.  
Hence, we have
\begin{align*}
     \limsup\limits_{T\to \infty}\frac{1}{a_T}\log \mathbb{P}_{\theta}\left(\|\widehat{\theta}_{T}-\theta\|_2 >  
    \varepsilon\sqrt{a_T/T}\right)=& \limsup\limits_{T\to \infty}\frac{1}{a_T}\log \mathbb{P}_{\theta}(\widehat{\vartheta}_{T}\in \mathcal{D})\\
    \leq& -\inf_{\theta'\in \mathsf{cl} \, \mathcal{D}}I(\theta',\theta) \leq- \varepsilon^2/(2\kappa(S_w)),
\end{align*}
where the equality exploits the definitions of~$\mathcal D$ and~$\widehat\vartheta_T$, and the first inequality follows from Proposition~\ref{prop:MDP:closed:loop}. By passing over to complementary events, we therefore obtain
\begin{equation*}
     \mathbb{P}_{\theta}\left(\|\widehat{\theta}_{T}-\theta\|_2 \leq 
     \varepsilon\sqrt{a_T/T}\right) \geq
     1 - e^{-\varepsilon^2 a_T/(2\kappa(S_w))+o(a_T)}.
\end{equation*}
For all sufficiently large sample sizes~$T$ satisfying the inequality $a_T \geq 2\kappa(S_w)(\log (1/\beta)+o(a_T))/\varepsilon^2$ this implies that
\begin{equation*}
     \mathbb{P}_{\theta}\left(\|\widehat{\theta}_{T}-\theta\|_2 \leq \varepsilon \sqrt{a_T/T}\right) \geq 1 - \beta.  
\end{equation*}
This observation completes the proof of assertion~{\em (ii)}.
\end{proof}
The proof of Lemma~\ref{lem:learn:stab} reveals that the requirement $\varepsilon\in (0,1)$ could be relaxed to $\varepsilon>0$ in assertion~{\em (ii)}. 
}

{
\begin{proof}[Proof of Proposition~\eqref{prop:finite:sample}]

As $\theta$ is $(\tau,\rho)$-stable, the defining properties of the reverse $I$-projection imply that
\begin{align*}
    I(\widehat{\theta}_T,\mathcal{P}(\widehat{\theta}_T))\leq & I(\widehat{\theta}_T,\theta)\\
    = & \frac{1}{2}\mathsf{tr}\left(S_w^{-1}(\widehat{\theta}_T-\theta)S_{\theta}(\widehat{\theta}_T-\theta)^{\mathsf{T}}\right)\\
    \leq & \frac{1}{2}\mathsf{tr}(S_w^{-1})\|\widehat{\theta}_T-\theta\|_2^2 \|S_{\theta}\|_2\\
    \leq & \frac{1}{2}n\kappa(S_w) \|\widehat{\theta}_T-\theta\|_2^2 \frac{\tau^2}{1-\rho^2},
\end{align*}
where the second inequality holds because $\mathsf{tr}(AB)\leq \mathsf{tr}(A)\|B\|_2$ for any two symmetric matrices $A,B \in \mathbb{R}^{n\times n}$, while the third inequality follows from~\cite[Proposition~E.5]{ref:Krauth-19}. 
Hence, up to problem-dependent constants, $I(\widehat{\theta}_T,\mathcal{P}(\widehat{\theta}_T))$ decays as least as fast as $\|\widehat{\theta}_T-\theta\|_2^2$. 
Combining the above estimate with Lemma~\ref{lem:rate:to:norm} and taking square roots then yields
\begin{align}
    \label{eq:rho-tau-estimate}
   \|\widehat{\theta}_T-\mathcal{P}(\widehat{\theta}_T)\|_2\leq  \|\widehat{\theta}_T-\theta\|_2 \kappa(S_w) \frac{n^\frac{1}{2}\tau}{\sqrt{1-\rho^2}}. 
\end{align}
Setting $\eta=\kappa(S_w) n^\frac{1}{2}\tau/\sqrt{1-\rho^2}\geq 1$, we may use a similar reasoning as in the proof Lemma~\ref{lem:learn:stab} to obtain
\begin{align*}
    \mathbb{P}_{\theta}&\left(\|\theta-\mathcal{P}(\widehat{\theta}_T)\|_2\leq 2\eta\varepsilon \right)\\
    &\geq \mathbb{P}_{\theta}\left(\|\theta-\widehat{\theta}_T\|_2\leq \eta \varepsilon ,~ \|\widehat{\theta}_T-\mathcal{P}(\widehat{\theta}_T)\|_2 \leq \eta \varepsilon\right)\\
    &\geq \mathbb{P}_{\theta}\left(\|\theta-\widehat{\theta}_T\|_2\leq \varepsilon ,~ \|\widehat{\theta}_T-\mathcal{P}(\widehat{\theta}_T)\|_2 \leq \eta \|\theta-\widehat{\theta}_T\|_2\right)\\
    &= \mathbb{P}_{\theta}\left( \|\theta-\widehat{\theta}_T\|_2\leq \varepsilon \right),
\end{align*}
where the second inequality holds because~$\eta\geq 1$, the equality follows from~\eqref{eq:rho-tau-estimate}, which holds with certainty. However, from the proof of Lemma~\eqref{lem:learn:stab}\,{\em{i}} we already know that $\mathbb{P}_{\theta}( \|\theta-\widehat{\theta}_T\|_2\leq \varepsilon)\geq 1-\beta$ whenever $T\geq \kappa(S_w) \widetilde{O}(n) \log(1/\beta)/\varepsilon^2$. This observation completes the proof of assertion~{\em (i)}. 

The proof of assertion~{\em (ii)} widely parallels that of assertion~{\em (i)} and is thus omitted for brevity. 
\end{proof}
}

\subsection{Proofs of Section~\ref{sec:num:I:proj}}

\begin{proof}[Proof of Proposition~\ref{prop:proxy}]
The claim follows immediately from the discussion leading to Proposition~\ref{prop:proxy}.
\end{proof}

The approximate computation of the reverse $I$-projection exploits standard results on infinite-horizon dynamic programming (see, \textit{e.g.}, \autocite[Chapter 3]{ref:Basar_95} or~\autocite{ref:BertVol2_07}) as well as the following exact constraint relaxation result borrowed from \autocite[Lemma A-0.1]{ref:mscThesis_wjongeneel_2019}; see also \autocite{ref:RLQR_CDC_19}. We repeat this result here to keep the paper self-contained.

\begin{lemma}[Exact constraint relaxation]
	\label{lem:relax_max}
	Let $f$ and $g$ be two arbitrary functions from $\Theta$ to $(-\infty,\infty]$, and consider the two closely related {minimization} problems
	\begin{align*}
	\mathcal{P}_1(r)~&:~ \inf_{\theta \in \Theta} \left\{ f(\theta) : g(\theta) \le r\right\}\\
	\mathcal{P}_2(\delta)~&:~ \inf_{\theta \in \Theta} f(\theta) + \frac{g(\theta)}{\delta}
	\end{align*}
	parametrized by $r \in\mathbb{R}$ and $\delta\in(0,\infty)$, respectively. If the penalty-based minimization problem $\mathcal{P}_2(\delta)$ admits an optimal solution $\theta^\star_2(\delta)$ for the parameter values $\delta$ within some set $\Delta\subset (0,\infty)$, then the following hold.
	\begin{enumerate}[(i)]
		\item \label{lem:relax:monotone}
		The function $ h(\delta) = g(\theta^\star_2(\delta))$ is non-decreasing in the parameter~$\delta \in\Delta$.
		
		\item \label{lem:relax:opt:1}
		If there exits $\delta\in\Delta$ with $h(\delta) = r$, then the constrained minimization problem~$\mathcal{P}_1(r)$ is solved by $\theta^\star_2(\delta)$.
	\end{enumerate}
\end{lemma}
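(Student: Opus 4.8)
The plan is to treat this as a standard penalty-versus-constraint (Lagrangian) argument and to establish each assertion by an elementary comparison of optimality inequalities, requiring no structure whatsoever on $\mathcal{X}$, $f$, or $g$ beyond what is stated.

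For Assertion~(i) I would fix two parameters $\delta_1,\delta_2\in\Delta$ with $\delta_1<\delta_2$ and abbreviate $x_i=x^\star_2(\delta_i)$. The heart of the argument is to write down the two optimality inequalities that each $x_i$ satisfies for \emph{its own} penalized problem, namely $f(x_1)-g(x_1)/\delta_1\ge f(x_2)-g(x_2)/\delta_1$ and $f(x_2)-g(x_2)/\delta_2\ge f(x_1)-g(x_1)/\delta_2$. Adding these cancels the $f$-terms, and after rearranging one obtains $(g(x_1)-g(x_2))(1/\delta_2-1/\delta_1)\ge 0$. Since $\delta_1<\delta_2$ forces $1/\delta_2-1/\delta_1<0$, dividing through reverses the inequality and gives $g(x_1)\le g(x_2)$, i.e.\ $h(\delta_1)\le h(\delta_2)$. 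That is the entire content of monotonicity.

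For Assertion~(ii) I would fix the distinguished $\delta\in\Delta$ with $g(x^\star_2(\delta))=r$ and write $x^\star=x^\star_2(\delta)$. Feasibility of $x^\star$ in $\mathcal{P}_1(r)$ is immediate because $g(x^\star)=r\le r$. For optimality, I would take any $x\in\mathcal{X}$ feasible for $\mathcal{P}_1(r)$, so that $g(x)\le r$, and invoke optimality of $x^\star$ for $\mathcal{P}_2(\delta)$ to get $f(x^\star)-g(x^\star)/\delta\ge f(x)-g(x)/\delta$. Rearranging yields $f(x^\star)\ge f(x)+(r-g(x))/\delta$, and because $r-g(x)\ge 0$ and $\delta>0$ the correction term is nonnegative, so $f(x^\star)\ge f(x)$. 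As $x$ was an arbitrary feasible point, $x^\star$ maximizes $f$ over the feasible set and therefore solves $\mathcal{P}_1(r)$.

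The only real subtlety, and the step I expect to require the most care, is that $f$ and $g$ take values in the extended reals $(-\infty,\infty]$, so the additive manipulation in~(i) could in principle produce an undefined $\infty-\infty$. I would address this by noting that the arithmetic is only ever performed at the optimizers $x^\star_2(\delta_i)$, where the penalized objective $f-g/\delta$ attains its (assumed) supremum; at such points one may take the optimal value to be finite, the degenerate cases making every point optimal or being vacuous and dispatchable separately. With this caveat in place, both assertions reduce to purely algebraic inequality-chaining.
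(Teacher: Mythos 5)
Your proof is correct and follows essentially the same route as the paper's: Assertion~(i) via summing the two cross-optimality inequalities for $\mathcal{P}_2(\delta_1)$ and $\mathcal{P}_2(\delta_2)$ to cancel the $f$-terms, and Assertion~(ii) via the same optimality comparison (you argue directly where the paper argues by contradiction, which is an immaterial difference). Your added remark on avoiding $\infty-\infty$ for extended-real-valued $f$ and $g$ is a reasonable point of care that the paper glosses over.
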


\begin{proof}
	In order to prove assertion~{\em (i)}, choose any parameters $\delta_1,\delta_2 \in\Delta$ with $\delta_1 > \delta_2$. As $\theta^\star_2(\delta_1)$ is optimal in $\mathcal{P}_2(\delta_1)$ and $\theta^\star_2(\delta_2)$ is optimal in $\mathcal{P}_2(\delta_2)$, one can readily verify that
	\begin{align*}
	f\big(\theta^\star_2(\delta_1)\big) + \frac{g(\theta^\star_2(\delta_1))}{\delta_1}\leq f(\theta^\star_2(\delta_2)) + \frac{g(\theta^\star_2(\delta_2))}{\delta_1}
	\end{align*}
	and
	\begin{align*}
	f(\theta^\star_2(\delta_2)) + \frac{g(\theta^\star_2(\delta_2))}{\delta_2}
	\leq f\big(\theta^\star_2(\delta_1)\big) + \frac{g(\theta^\star_2(\delta_1))}{\delta_2}.
	\end{align*}
	Summing up these two inequalities yields
	\begin{align*}
	 \left(\frac{1}{\delta_2}-\frac{1}{\delta_1}\right)g\big(\theta^\star_2(\delta_2)\big) \le \left(\frac{1}{\delta_2}-\frac{1}{\delta_1}\right)g\big(\theta^\star_2(\delta_1)\big)  
	 \iff h(\delta_2)= g\big(\theta^\star_2(\delta_2)\big) \le g\big(\theta^\star_2(\delta_1)\big) =h(\delta_1),
	\end{align*}
	where the equivalence holds because $\delta_1>\delta_2$. This completes the proof of assertion~{\em (i)}. 
	
	As for assertion~{\em (ii)}, fix any $r\in\mathbb R$ and assume that there exists $\delta\in\Delta$ with $r=h(\delta)$. We need to show that the optimizer~$\theta^\star_2(\delta)$ of~$\mathcal{P}_2(\delta)$ is also optimal in $\mathcal{P}_1(r)$.
	To this end, observe that~$\theta^\star_2(\delta)$ is feasible in~$\mathcal{P}_1(r)$ because $r=h(\delta)=g(\theta_2^\star(\delta))$. It then suffices to prove optimality. Assume for the sake of contradiction that there exists $\theta'_1\in\Theta$ with $f(\theta'_1) < f(\theta^\star_2(\delta))$ and $g(\theta'_1) \le g(\theta^\star_2(\delta))=r$. In this case, we have
	\begin{align*}
	f(\theta'_1) + \frac{g(\theta'_1)}{\delta} <  f\big(\theta^\star_2(\delta)\big) + \frac{g(\theta^\star_2(\delta))}{\delta} \,,
	\end{align*}
	which contradicts the optimality of $\theta^\star_2(\delta)$ in~$\mathcal{P}_2(\delta)$. We thus conclude that $\theta^\star_2(\delta)$ must indeed solve $\mathcal{P}_1(r)$. 
\end{proof}

We also recall the following matrix inversion lemma.

\begin{lemma}[Matrix inversion~{\autocite[p.~19]{Verhaegen}}]
\label{lem:M:inv}
If $A$ and $C$ are invertible matrices, then we have
\begin{align*}
    &(A+BCD)^{-1} =A^{-1}-A^{-1}B(C^{-1}+DA^{-1}B)^{-1}DA^{-1}.
\end{align*}
\end{lemma}

\begin{proof}[Proof of Proposition~\ref{prop:opt:sol}]
Fix any $\theta'\notin \Theta$,
{ and identify the reverse $I$-projection problem~\eqref{equ:jr} with problem~$\mathcal{P}_1(r)$ from Lemma~\ref{lem:relax_max}, that is, set $f(\theta)=\mathsf{tr}(Q S_{\theta})$ and $g(\theta)=I(\theta',\theta)$. By the definition of the rate function~$I$ in~\eqref{eq:rate:function:AR:MDP}, the corresponding unconstrained problem~$\mathcal{P}_2(\delta)$ is equivalent to
\begin{align*}
    \min_{\theta\in \Theta} \mathsf{tr}(Q S_{\theta})+\frac{1}{\delta} I(\theta',\theta)
    &= \min_{\theta\in \Theta} \lim_{T\to \infty}\frac{1}{T} \textstyle \mathbb E_\theta\Big[\sum_{k=0}^{T-1} x_k^{\mathsf{T}}Qx_k  + \frac{1}{2\delta}x_k^{\mathsf{T}}(\theta'-\theta)^{\mathsf{T}} S_w^{-1}(\theta'-\theta) x_k \Big]\\
    &= \min_{L\in \mathbb R^{n\times n}} \lim_{T\to \infty}\frac{1}{T} \textstyle \mathbb E_{\theta'+L}\Big[\sum_{k=0}^{T-1} x_k^{\mathsf{T}}\left(Q + \frac{1}{2\delta}L^{\mathsf{T}} S_w^{-1}L\right) x_k \Big],
\end{align*}
where the first equality exploits the Markov law of large numbers. The second equality follows from the variable substitution $L\leftarrow \theta'-\theta$. Note that the constraint $\theta'+L\in\Theta$ can be relaxed because $\mathbb E_{\theta'+L}[x_k^{\mathsf{T}}Qx_k]$ diverges with $k$ whenever $\theta'+L$ is unstable. Indeed, in this case the trace of the covariance matrix of~$x_k$ explodes. Also, we remark again that $\mathbb{E}_{\theta'+L}[\cdot]$ merely indicates that the distribution is parametric in $\theta'+L$, the variables $\theta'$ and $L$ are not random in the above. 

As any infinite horizon time-homogeneous LQR problem with average cost criterion is solved by a linear control policy of the form $u_k=Lx_k$ for some~$L\in\mathbb R^{n\times n}$, problem~$\mathcal{P}_2(\delta)$ is equivalent to
\begin{equation}
\label{equ:best:DP}
    \begin{array}{cl}
        \min\limits_{\varphi_k(\cdot)} & \displaystyle\lim_{T\to \infty}\frac{1}{T}
        \mathbb{E} \left[\textstyle\sum^{T-1}_{k=0}  x_k^{\mathsf{T}} Q x_k + u_k^{\mathsf{T}} R u_k \right]  \\
         \mathrm{s.t.}&   x_{k+1}=\theta'x_k +u_k +w_k,\quad x_0\sim \nu ,\\ & u_k=\varphi_k(x_k),
    \end{array}
\end{equation}
where $R=(2 \delta S_w)^{-1}$, and the expectation is evaluated with respect to the canonical probability measure induced by the initial state distribution~$\nu$, the control policy $\{\varphi_k\}_{k=0}^\infty$ and the corresponding system dynamics. As standard stabilizability and detectability assumptions are trivially satisfied~\autocite[Chapter 4]{ref:BertVolI_05}, the LQR problem~\eqref{equ:best:DP} is solvable for every~$\delta>0$. Its optimal solution is a stationary linear control policy with state feedback gain $L_{\delta}= -({P}_{\delta}+R)^{-1}{P}_{\delta }\theta'$, where ${P}_{\delta }$ is the unique positive definite solution of the Riccati equation
\begin{equation*}
    P_{\delta} = Q + \theta'^{\mathsf{T}} P_{\delta} \theta' - \theta'^{\mathsf{T}}P_{\delta}( P_{\delta} + R)^{-1}P_{\delta}\theta'.
\end{equation*}
Note that this equation is equivalent to~\eqref{equ:P_ARE:best} by Lemma~\ref{lem:M:inv} and the definition of~$R$. Hence, problem $\mathcal P_2(\delta)$ is solved by
\begin{align*}
    \theta^\star_2(\delta)&=\theta'+L_\delta =\theta'-({P}_{\delta}+R)^{-1}{P}_{\delta }\theta'\\ &  = (I_n+R^{-1} P_{\delta})^{-1}\theta'
\end{align*}
for any $\delta>0$, where the third equality follows again from Lemma~\ref{lem:M:inv}. Note that the last expression is equivalent to~$\theta^\star_\delta$ from the proposition statement. By using \autocite[Lemma 3.2]{polderman2}, one can show that $P_{\delta}$ and consequently also~$\theta^\star_\delta$ are real-analytic in $\delta>0$. As the rate function~$I$ is analytic thanks to Proposition~\ref{prop:I_inf}\,{\em (i)}, the function~$\varphi^{-1}(\delta)=I(\theta',\theta^\star_\delta)$ is thus analytic as a composition of two analytic functions. In addition, $\varphi^{-1}(\delta)$ is non-decreasing thanks to Lemma~\ref{lem:relax_max}\,{\em (i)}. As any non-decreasing analytic function that is not constant must be strictly monotonically increasing, we may conclude that~$\varphi^{-1}:(0,\infty)\to(\underline r, \overline r)$ is bijective, where 
\[
    \underline{r}=\lim_{\delta\downarrow 0}\varphi^{-1}(\delta)\quad \text{and} \quad \overline{r}=\lim_{\delta \uparrow  \infty}\varphi^{-1}(\delta). 
\]
Note that if $\delta$ tends to~$0$, then problem~$\mathcal P_2(\delta)$ just minimizes $g(\theta)=I(\theta',\theta)$ over~$\Theta$, in which case the reverse $I$-projection $\mathcal P(\theta')$ is optimal. Recall that $\mathcal P(\theta')$ is also optimal in problem $\mathcal P_1(r)$ for $r=I(\theta',\mathcal P(\theta'))=\underline r$. Note also that if $\delta$ tends to $\infty$, then problem~$\mathcal P_2(\delta)$ just minimizes $f(\theta)=\mathsf{tr}(Q S_{\theta})$ over~$\Theta$, in which case the trivial solution~$\theta=0$ is optimal. Clearly, $0$ is also optimal in problem $\mathcal P_1(r)$ for $r=I(\theta',0)=\overline r$. Hence, we may define~$\varphi:(\underline r, \overline r)\to (0,\infty)$ as the inverse of~$\varphi^{-1}$. By construction, $\varphi$ is analytic, strictly increasing and bijective.

In summary, Lemma~\ref{lem:relax_max} implies that for each $r\in (\underline{r},\overline{r})$ we may set $\delta=\varphi(r)$  such that $\theta^\star_\delta$ is the unique optimal solution of problem~$\mathcal P_1(r)$, and this solution satisfies $I(\theta',\theta^\star_\delta)=r$.}
\end{proof}

\begin{proof}[Proof of Proposition~\ref{prop:numerical:comp}]
Fix any $\theta'\notin \Theta$. Proposition~\ref{prop:jr:stable} implies that $\lim_{\delta\downarrow 0}\theta^{\star}_{\delta}=\mathcal{P}(\theta')$. 
However, one cannot evaluate~$\theta^{\star}_{\delta}$ at $\delta= 0$. Indeed, the Riccati equation~\eqref{equ:P_ARE:best} fails to have a positive definite solution for $\delta=0$ because~$\theta'$ is unstable. Nevertheless, the error bound~\eqref{equ:poly:error} follows directly from the Pinsker-type inequality established in Lemma~\ref{lem:rate:to:norm} and from the analyticity of $I(\theta',\theta^{\star}_{\delta})$ in $\delta\in(0,\infty)$. For $\delta>0$, the proof of Proposition~\ref{prop:opt:sol} reveals that $\theta^{\star}_{\delta}$ can be computed by solving problem~\eqref{equ:best:DP}, which can be addressed with standard LQR routines. Hence, the computational bottleneck is the solution of the Riccati equation~\eqref{equ:P_ARE:best}. The state-of-the-art methods to solve~\eqref{equ:P_ARE:best} utilize a QZ algorithm that has time and memory complexity of the order $O(n^3)$ and ${O}(n^2)$, respectively; see, {\em e.g.}, \autocite{ref:Pappas} and \autocite[Algorithm 7.7.3]{ref:GvL}. However, large problem instances should be addressed with alternative schemes such as the ones proposed in~\autocite{ref:Laub:parallel,ref:Fassbender}.
\end{proof}

\begin{proof}[Proof of Corollary~\ref{cor:struc}]
Fix any $\theta'\in \Theta'$. In view of Proposition~\ref{prop:opt:sol}, it follows directly from~\autocite{ref:Topo2020} that for any $\delta\in (0,\infty)$ there is a $\Lambda_{\delta}$ with $\mathrm{det}(\Lambda_{\delta})>0$ such that $\theta^{\star}_{\delta}=\Lambda_{\delta}^{-1}\theta'$. Next, as $\Lambda_{\delta}=(I_n+2\delta S_w P_{\delta})$, $S_w\succ 0$, $P_{\delta}\succeq Q\succ 0$, $P_{\delta}$ is real-analytic in $\delta$ over $(0,\infty)$ and $\lambda_i(S_w P_{\delta})>0$ for $i\in [n]$ we have that $\lim_{\delta \downarrow 0}\mathrm{det}(\Lambda_{\delta})>0$, which concludes the proof. 
\end{proof}

\subsection{Additional numerical results to Section~\ref{sec:num}}
Figure~\ref{fig:learn:stab:random:app} provides additional numerical results for the examples in Section~\ref{sec:num}. {We point out that the gap between the spectral radii of~$\widehat \theta_T$ and~$\mathcal{P}(\widehat{\theta}_T)$ is more pronounced in Figure~\ref{fig:m64} than in Figure~\ref{fig:m9}. It emerges because of an insufficient number of experiments. Indeed, $\widehat{\theta}_T$ materializing just outside of~$\Theta$ appears to be a rare event in high dimensions.}
\begin{figure*}[t!]
    \centering
    \begin{subfigure}[b]{0.22\textwidth}
        \includegraphics[width=\textwidth]{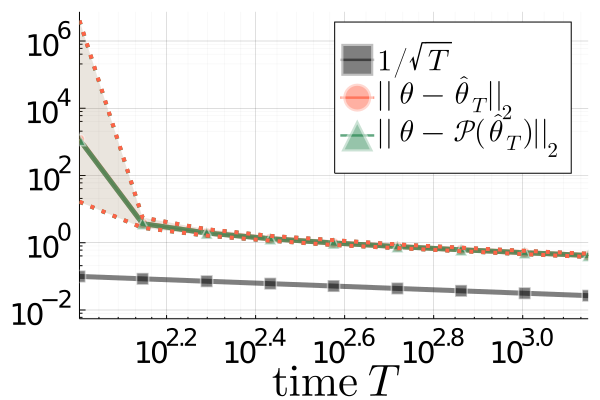}
        \caption{Convergence in operator norm for $n=100$.}
        \label{fig:learn:norm:n100}
    \end{subfigure}\quad
    \begin{subfigure}[b]{0.22\textwidth}
        \includegraphics[width=\textwidth]{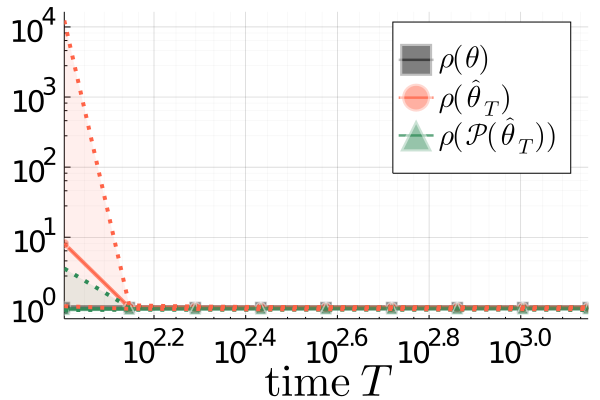}
        \caption{Convergence of spectral radii for $n=100$.}
        \label{fig:learn:rad:n100}
    \end{subfigure}\quad 
    \begin{subfigure}[b]{0.22\textwidth}
        \includegraphics[width=\textwidth]{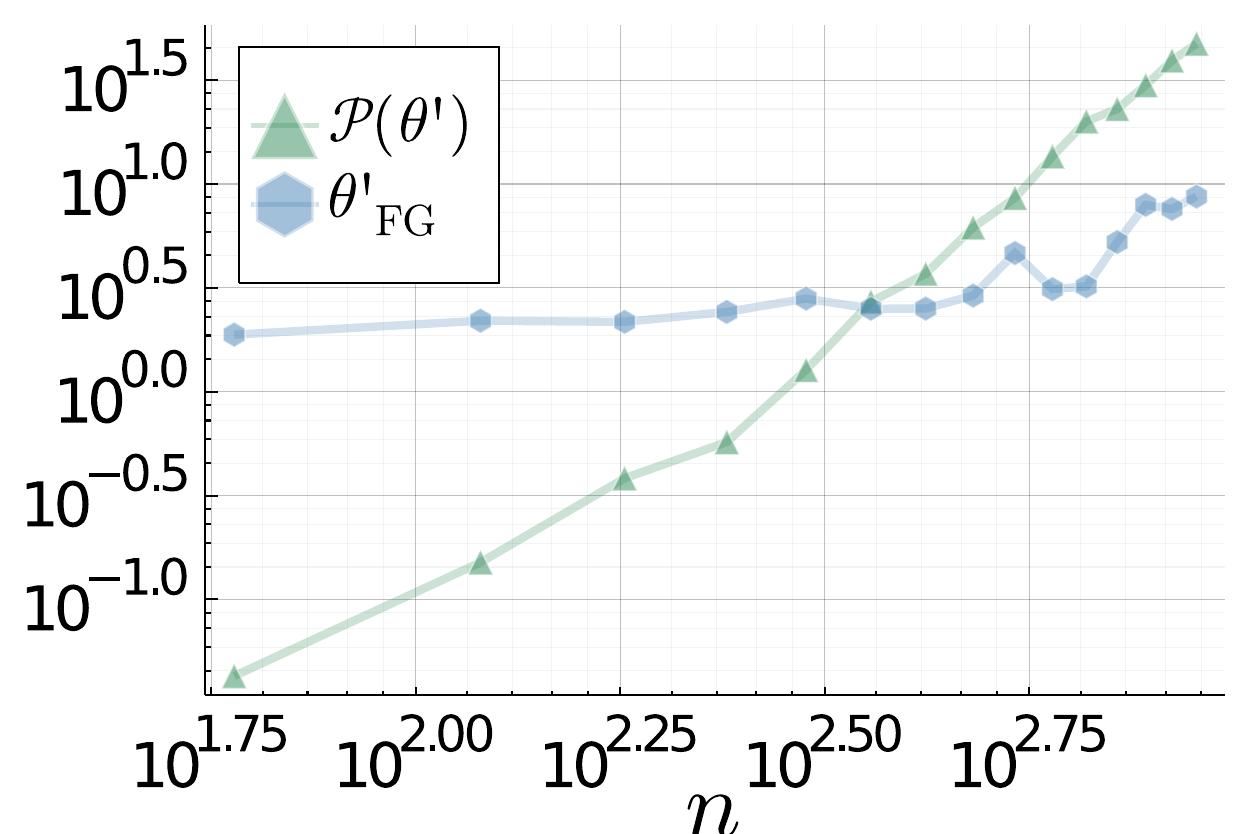}
        \caption{Runtime comparison on $\theta'=(Y\otimes 2 I_3)$.}
        \label{fig:runtime}
    \end{subfigure}\quad
    \begin{subfigure}[b]{0.22\textwidth}
        \includegraphics[width=\textwidth]{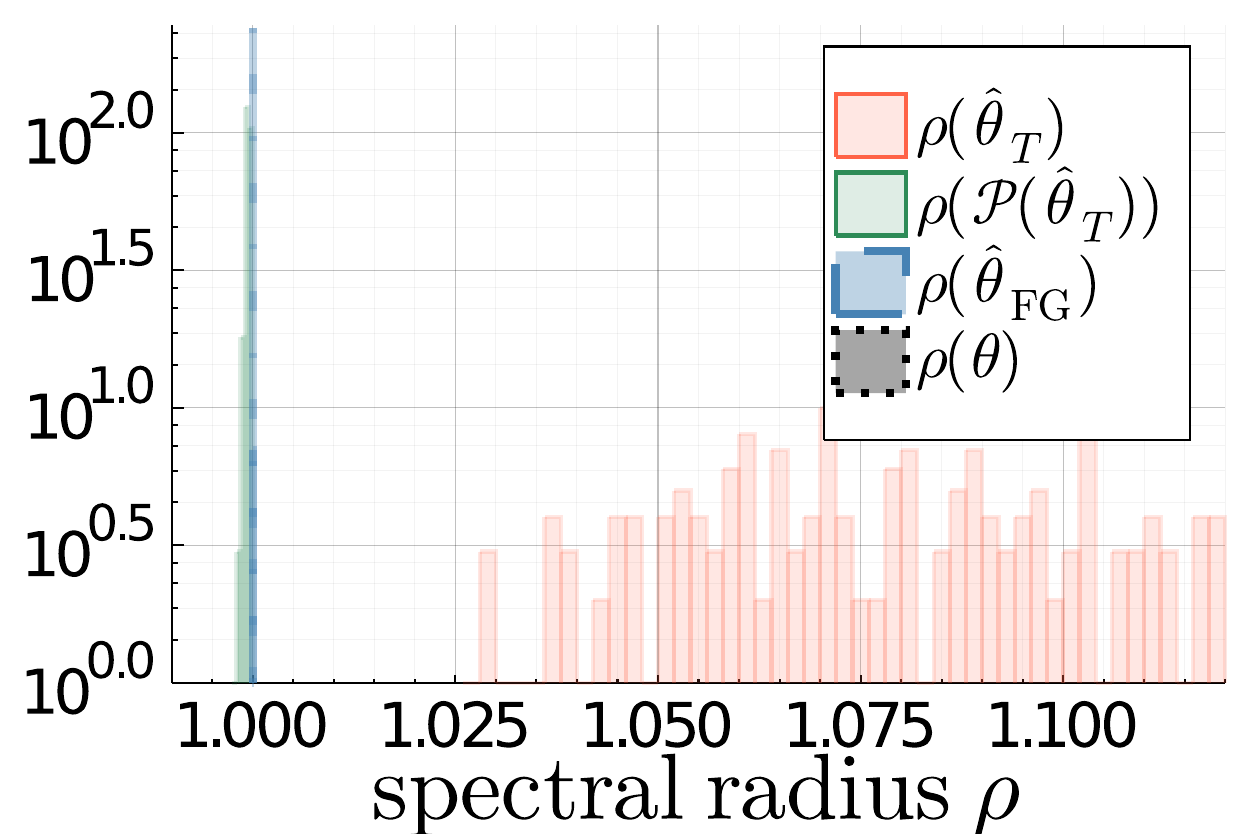}
        \caption{Historgrams of the spectral radii for $n=192$.}
        \label{fig:m64}
    \end{subfigure}
    \caption[]{Additional material related to Section~\ref{sec:num}.}
    \label{fig:learn:stab:random:app}
\end{figure*}

\newpage
\pagestyle{basicstyle}
\subsection*{Bibliography}
\printbibliography[heading=none]


\end{document}